\newtheorem{prop}{Proposition}[section]
\newtheorem{lem}[prop]{Lemma}
\newtheorem{cor}[prop]{Corollary}
\newtheorem{thm}[prop]{Theorem}
\newtheorem{conj}[prop]{Conjecture}
\theoremstyle{definition}
\newtheorem{rem}[prop]{Remark}
\newtheorem{defi}[prop]{Definition}
\newtheorem{ex}[prop]{Example}
\def\Z{\mathbb{Z}}
\def\equad{\quad \textrm{ and } \quad}
\def\H{\mathrm{H}}
\def\SMA{\mathrm{SMA}}
\def\MR{\mathrm{MR}}
\def\MRS{\mathrm{MRS}}
\def\M{\mathrm{MPF}}
\def\MS{\mathrm{MPFS}}
\def\Mo{{}^0\mathrm{MPF}}
\def\MoS{{}^0\mathrm{MPFS}}
\def\B{\mathcal{B}}
\def\E{\mathcal{E}}
\def\G{\mathcal{G}}
\numberwithin{equation}{section}
\begin{document}

\title{Magic partially filled arrays on abelian groups}

\author[Fiorenza Morini]{Fiorenza Morini}
\address{Dipartimento di Scienze Matematiche, Fisiche e Informatiche, Universit\`a di Parma,\\
Parco Area delle Scienze 53/A, 43124 Parma, Italy}
\email{fiorenza.morini@unipr.it}

\author[Marco Antonio Pellegrini]{Marco Antonio Pellegrini}
\address{Dipartimento di Matematica e Fisica, Universit\`a Cattolica del Sacro Cuore,\\
Via della Garzetta 48, 25133 Brescia, Italy}
\email{marcoantonio.pellegrini@unicatt.it}
 
\begin{abstract}
In this paper we introduce a special class of partially filled arrays.
A magic partially filled array $\M_\Omega(m,n; s,k)$ on a subset $\Omega$ of an abelian group $(\Gamma,+)$ is a  partially filled array
of size $m\times n$ with entries in $\Omega$ such that $(i)$ every $\omega \in \Omega$ appears once in the array;
$(ii)$ each row contains $s$ filled cells and each column contains $k$ filled cells;
$(iii)$ there exist (not necessarily distinct) elements $x,y\in \Gamma$ such that the sum of the elements in each row is $x$  and
the sum of the elements in each column is  $y$.
In particular, if $x=y=0_\Gamma$, we have a zero-sum magic partially filled array $\Mo_\Omega(m,n; s,k)$.
Examples of these objects are magic rectangles, $\Gamma$-magic rectangles, signed magic arrays, (integer or non integer) Heffter arrays.
Here, we give necessary and sufficient conditions for the existence of a magic rectangle with empty cells, i.e., of an $\M_\Omega(m,n;s,k)$
where $\Omega=\{1,2,\ldots,nk\}\subset\Z$. We also construct zero-sum magic partially filled arrays
when $\Omega$ is the abelian group $\Gamma$ or the set of its nonzero elements.
 \end{abstract}

\keywords{Magic rectangle; signed magic array; Heffter array; magic labeling;
$\Gamma$-supermagic labeling;  zero-sum $\Gamma$-magic graph}
\subjclass[2010]{05B15; 05C78; 05B30}
\maketitle

\section{Introduction}\label{intro}

The aim of this paper is to introduce and study the following class of partially filled arrays (that is, matrices where some cells are allowed to be empty),
whose elements belong to an abelian group.

\begin{defi}\label{magic_pf}
A \emph{magic partially filled array} $\M_\Omega(m,n; s,k)$ on a subset $\Omega$ of an abelian group $(\Gamma,+)$ 
is a  partially filled array of size $m\times n$ with entries in $\Omega$ such that
\begin{itemize}
\item[{\rm (a)}] every $\omega \in \Omega$ appears once in the array;
\item[{\rm (b)}] each row contains $s$ filled cells and each column contains $k$ filled cells;
\item[{\rm (c)}] there exist (not necessarily distinct) elements $x,y\in \Gamma$ such that the sum of the elements in each row is $x$  and
the sum of the elements in each column is  $y$.
\end{itemize}
\end{defi}

Throughout this paper, we will always assume $|\Omega|>1$. So, necessary conditions for the existence of an $\M_\Omega(m,n; s,k)$ 
are $2\leq s \leq n$, $2\leq k \leq m$ and $|\Omega|=ms=nk$.

We came along with this definition considering two recent generalizations of magic rectangles.
We recall that a \emph{magic rectangle} $\MR(m,n)$ is an $m\times n$ array whose entries are the integers $1,2,\ldots,mn$,
each appearing once in such a way that the sum of the elements in each row is a constant  $x$  and the sum of the elements in each column is a constant $y$. 
These are well known objects: as shown in \cite{H1,H2}, an $\MR(m,n)$ exists if and only if $m,n>1$, $mn>4$ and $m\equiv n \pmod 2$.
In our terms, a magic rectangle is a tight $\M_\Omega(m,n;n,m)$ where $\Omega=\{1,2,\ldots,mn\}\subset (\Z,+)$.
In \cite{KL}, Khodkar and Leach considered magic rectangles with some empty cells. They gave partial results about 
the existence of an
$\MR(m,n;s,k)$, i.e., in our terminology, of an $\M_\Omega(m,n; s,k)$ where $\Omega=\{1,2,\ldots,nk\}\subset \Z$. 
On the other hand, starting from the concept of a magic square with elements on an abelian group \cite{Sun},  Cichacz studied in \cite{C15} 
the existence of a magic $m\times n$ rectangle with elements in an  abelian group $\Gamma$ of order $mn$ 
(i.e., an $\M_\Gamma(m,n;n,m)$). 

In \cite{F1,F2}, Froncek introduced the notion of magic rectangle set $\MRS(m, n; c)$.
Similarly, also Cichacz was interested in magic rectangle sets on abelian groups.

\begin{defi}\cite{C15}\label{magic_sets}
A \emph{$\Gamma$-magic rectangle set} $\MRS_\Gamma(m, n ; c )$ on an abelian group $(\Gamma,+)$ of order $m n c $ is a set of $c$ arrays of size $m\times n$, 
whose entries are elements of $\Gamma$, each appearing once, with all row sums in each rectangle equal to
a constant $x \in \Gamma$ and all column sums in each rectangle equal to a constant $y \in \Gamma$.
\end{defi}

Even if Froncek provided in \cite{F3} necessary and sufficient conditions for the existence of magic rectangle sets,
the construction of an $\MRS_\Gamma(m, n ; c )$ is, in general, still an open problem, see \cite{CH,CH2}. In particular, the following conjecture has been proposed by Cichacz and Hinc,
where $\G$ denotes the set of all finite abelian groups that either have odd order or contain more than one involution (i.e., an element of order two).

\begin{conj}\cite{CH}
Let $m,n > 1$ and $c\geq 1$. An $\MRS_\Gamma( m , n ; c )$ exists if and only if $m$ and $n$ 
are both even or $\Gamma \in\G$ and $\{ m , n \} \neq \{2\ell + 1, 2\}$.
\end{conj}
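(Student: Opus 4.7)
The plan is to prove necessity and sufficiency separately.

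For necessity, I would start from the global sum identity obtained by adding all entries of the $c$ rectangles row-wise and column-wise:
\[
c\,m\,x \;=\; c\,n\,y \;=\; \sigma(\Gamma):=\sum_{g\in\Gamma}g.
\]
A classical fact about finite abelian groups says $\sigma(\Gamma)=0$ exactly when $\Gamma\in\G$, and $\sigma(\Gamma)$ equals the unique involution $\iota$ otherwise. If $\Gamma\notin\G$, write $\Gamma=H\times\Z_{2^t}$ with $|H|$ odd and $t\ge 1$, so that $\iota=(0,2^{t-1})$. Projecting $cmx=\iota$ onto $\Z_{2^t}$ yields $v_2(cm)\le t-1$ for the $2$-adic valuation $v_2$; since $v_2(|\Gamma|)=v_2(cmn)=t$, this forces $v_2(n)\ge 1$, so $n$ is even, and symmetrically $m$ is even. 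For the excluded case $\{m,n\}=\{2\ell+1,2\}$ with $\Gamma\in\G$ (say $n=2$), the $mc$ rows partition $\Gamma$ into pairs $\{a,x-a\}$, forcing $2a\ne x$ for every $a\in\Gamma$, i.e., $x\notin 2\Gamma$; but $mx=2y$ gives $(2\ell+1)x\in 2\Gamma$, which is equivalent to $x\in 2\Gamma$ since $2\ell+1$ is odd, a contradiction.

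For sufficiency, I would exhibit an $\MRS_\Gamma(m,n;c)$ for every admissible triple. Case A ($m,n$ both even): I would start from a magic $m\times n$ rectangle on a subgroup $H\le\Gamma$ of index $c$ and translate it by each of the $c$ coset representatives of $H$; the evenness of both $m$ and $n$ guarantees that the induced modifications of row and column sums can be absorbed into a uniform choice of $x,y$ across all cosets. Case B ($\Gamma\in\G$ with at least one of $m,n$ odd, excluding $\{m,n\}=\{2\ell+1,2\}$): I would split according to the structure of the $2$-Sylow subgroup. When $|\Gamma|$ is odd, the $\Gamma$-magic rectangle constructions of \cite{C15} combined with a decomposition $\Gamma\cong\Gamma_0\times\Z_c$ yield the required set. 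When $\Gamma$ has several involutions, write $\Gamma\cong\Gamma_{\mathrm{odd}}\times T$ with $T$ the non-cyclic $2$-Sylow subgroup, build an $\MRS_T$ on the $2$-part (where the presence of multiple involutions defeats the pairing obstruction), and combine it with a $\Gamma_{\mathrm{odd}}$-magic rectangle via a Kronecker-style direct product.

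The main obstacle will be Case B when one of $m,n$ equals $3$ and $T$ has small rank, e.g., $T\cong\Z_2\times\Z_{2^u}$; here the pairing argument nearly fails and ad hoc base-case constructions on the $2$-part are unavoidable before any multiplicative lifting can take over. This low-dimensional combinatorial step is where most of the difficulty sits and is presumably the reason the conjecture is still open in general.
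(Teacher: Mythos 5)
The statement you are addressing is presented in the paper as an \emph{open conjecture} of Cichacz and Hinc (quoted from \cite{CH}); the paper explicitly remarks that the construction of an $\MRS_\Gamma(m,n;c)$ is in general still an open problem, so there is no proof in the paper to compare against, and a complete argument here would be a new result rather than a reproduction. Your necessity half is essentially correct and is the known direction: the identity $cmx=cny=\sum_{g\in\Gamma}g$, the fact that this sum is the unique involution when $\Gamma\notin\G$, the $2$-adic valuation argument forcing $m$ and $n$ both even, and the pairing argument for $\{m,n\}=\{2\ell+1,2\}$ (the rows partition $\Gamma$ into pairs $\{a,x-a\}$, so $x\notin 2\Gamma$, while $(2\ell+1)x=2y$ and $2\ell+1$ odd force $x\in 2\Gamma$) are all sound.

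The sufficiency half, however, has concrete gaps and is exactly the open content of the conjecture. In your Case A the coset-translation scheme does not produce a magic rectangle \emph{set}: translating an $H$-magic rectangle by a representative $g$ changes the row sums to $x_0+ng$ and the column sums to $y_0+mg$, and Definition \ref{magic_sets} requires a single constant $x$ shared by all $c$ rectangles; making the shifted row sums agree forces $ng_i+nH=ng_j+nH$ for all coset representatives, i.e.\ $n\Gamma=nH$, which already fails for $\Gamma=\Z_{16}$, $m=n=2$, $c=4$, $H=4\Z_{16}$: the achievable shifts for the cosets of $0$ and of $1$ are $x_0+\{0,8\}$ and $x_0+\{2,10\}$, which are disjoint. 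In Case B the decomposition $\Gamma\cong\Gamma_0\oplus\Z_c$ need not exist (take $\Gamma=\Z_9$ with $mn=3$ and $c=3$), so the reduction to \cite{C15} plus a Kronecker product does not go through as stated; and the ``low-dimensional base cases'' on a $2$-Sylow subgroup $\Z_2\oplus\Z_{2^u}$ that you flag as the main obstacle are precisely where the conjecture remains unresolved, so deferring them leaves the argument open. As written, you have a correct proof of the ``only if'' direction and a programme, not a proof, for the ``if'' direction.
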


In the same spirit, we introduce the following definition.

\begin{defi}\label{magic_pf_set}
A \emph{magic partially filled array set} $\MS_\Omega(m,n; s,k; c)$ on a subset $\Omega$ of an abelian group $(\Gamma,+)$
is a set of $c$ partially filled arrays of size $m\times n$ with entries in $\Omega$  such that
\begin{itemize}
\item[{\rm (a)}] every $\omega \in \Omega$ appears  once and in a unique array;
\item[{\rm (b)}] for every array, each row contains $s$ filled cells and each column contains $k$ filled cells;
\item[{\rm (c)}] there exist (not necessarily distinct) elements $x,y\in \Gamma$ such that, for every array, the sum of the elements in each row is $x$  and
the sum of the elements in each column is  $y$.
\end{itemize}
\end{defi}

One of our main goals is the construction of magic rectangle sets with empty cells, denoted by $\MRS(m,n;s,k;c)$, 
which are nothing but $\MS_\Omega(m,n; s,k; c)$, where $\Omega=\{1,\ldots,nkc\}\subset \Z$. Note that the aforementioned (tight) magic rectangle sets $\MRS(m,n;c)$ studied by Froncek correspond to the case
$\MS_\Omega(m,n;n,m;c)$, where $\Omega=\{1,\ldots,mnc\}\subset \Z$.
We are also interested  in constructing magic partially filled arrays (sets), where the elements of each row and of each columns sum to zero.

\begin{defi}
Given a subset $\Omega$ of an abelian group $(\Gamma,+)$, we say that an $\M_\Omega(m,n; s,k)$ is a
\emph{zero-sum magic partially filled array} (and we write $\Mo_\Omega(m,n; s,k)$) 
if the elements in each row and in each column sum to  $0\in \Gamma$.
Similarly, we speak about a \emph{zero-sum magic partially filled array set} (writing $\MoS_\Omega(m,n; s,k;c)$) 
if, for every array, the elements in each row and in each column sum to  $0\in \Gamma$.
\end{defi}

Examples of $\Mo_\Omega(m,n; s,k)$  are the 
signed magic arrays, denoted by $\SMA(m,n;s,k)$ in \cite{KSW}: 
they correspond to the case
$\Omega=\{0,\pm 1, \pm 2,\ldots, \pm (nk-1)/2\}\subset \Z$ if $nk$ is odd or
$\Omega=\{\pm 1, \pm 2, \ldots, \pm nk/2\}\subset \Z$ if $nk$ is even.
Also the Heffter arrays, introduced by Archdeacon in \cite{A}, can be viewed as zero-sum magic partially filled arrays. 
 
\begin{defi}\label{def:H}
A \emph{Heffter array} $\H(m,n; s,k)$ is an $m \times n$ partially filled array with elements in the cyclic group $(\Z_{2nk+1},+)$ such that
\begin{itemize}
\item[(\rm{a})] for every $x\in \Z_{2nk+1}\setminus\{0\}$, either $x$ or $-x$ appears in the array;
\item[(\rm{b})] each row contains $s$ filled cells and each column contains $k$ filled cells;
\item[(\rm{c})] the elements in every row and column sum to $0$ in $\Z_{2nk+1}$.
\end{itemize}
\end{defi}

In \cite{CDDY} it was proved that a square Heffter array $\H(n,n;k,k)$ exists for all $n\geq k\geq 3$,
while in \cite{ABD} the authors proved the existence of a $\H(m,n;n,m)$ for all $m,n\geq 3$.
The first results about non-square Heffter arrays with empty cells have been obtained in \cite{MP,MP3}. They confirm the following.

\begin{conj}\cite[Conjecture 6.3]{A}\label{ConjArch}
Given four integers $m,n,s,k$ such that $3\leq s\leq n$, $3\leq k \leq m$ and $ms=nk$,  there exists a Heffter array $\H(m,n;s,k)$.
\end{conj}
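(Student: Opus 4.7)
The plan is to attack Conjecture \ref{ConjArch} by reducing to parameter ranges already handled in the literature and building the remaining cases via the partially filled array framework of the present paper. Writing $d=\gcd(m,n)$, $m=dm'$, $n=dn'$ with $\gcd(m',n')=1$, the relation $ms=nk$ forces $s=n'\ell$ and $k=m'\ell$ for some integer $\ell\geq 1$, so the target array decomposes naturally into a $d\times d$ grid of $m'\times n'$ blocks, each carrying $\ell$ filled cells. This block structure provides a uniform template that is compatible with the constructions of $\Mo_\Omega$-arrays developed here.

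First I would carve off the extremal families: $s=n$ and $k=m$ (no empty cells) from \cite{ABD}, the square case $m=n$, $s=k$ from \cite{CDDY}, and the sparse non-square cases in \cite{MP,MP3}. The remaining ranges I would organise according to the parities of $m,n,s,k$, a splitting that also controls the structure of inverse pairs in the cyclic group $\Z_{2nk+1}$. For each parity class the strategy is two-stage: first use the constructions for $\Mo_\Omega(m,n;s,k)$ with $\Omega=\Z_{2nk+1}\setminus\{0\}$ to produce a zero-sum MPF in which the inverse pairs $\{x,-x\}$ sit in controlled positions, then remove one representative of each pair in a way that preserves both the row and the column sum conditions modulo $2nk+1$.

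The main obstacle is precisely this second stage. In a zero-sum MPF on $\Z_{2nk+1}\setminus\{0\}$ each inverse pair appears twice, so deleting one occurrence of each pair generically destroys both the row and the column balance; restoring them simultaneously requires threading an involution through the array that is row- and column-compatible, and this is a genuinely non-trivial combinatorial problem. Moreover, since the modulus $2nk+1$ depends on both $n$ and $k$, Heffter arrays at different scales live in different cyclic groups and cannot be glued directly, so any inductive construction must re-establish the $\{x,-x\}$-symmetry at every step. Most of the difficulty of the conjecture lies here, and in particular the small-$s$ cases ($s=3$ or $k=3$) will likely require ad hoc Skolem-type difference constructions rather than the general framework.
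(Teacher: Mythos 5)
This statement is Archdeacon's Conjecture 6.3, and the paper does not prove it: it is quoted as an open conjecture, with only partial confirmations cited (\cite{CDDY} for the square case, \cite{ABD} for the tight case, \cite{MP,MP3} for some rectangular cases with empty cells). So there is no proof in the paper to compare yours against, and your text must stand on its own as a complete argument — which, by your own admission, it does not: you describe a two-stage strategy and then identify the second stage as an unresolved ``genuinely non-trivial combinatorial problem.'' That second stage is not a technical loose end; it \emph{is} the Heffter array existence problem. Producing a zero-sum array over all of $\Z_{2nk+1}\setminus\{0\}$ and then selecting one representative of each pair $\{x,-x\}$ so that every row and every column still sums to $0$ is exactly condition (a) plus (c) of Definition \ref{def:H}, so the plan begs the question.

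There are also concrete inconsistencies in the setup. A $\Mo_\Omega(m,n;s,k)$ with $\Omega=\Z_{2nk+1}\setminus\{0\}$ cannot exist with the same parameters $(m,n,s,k)$ as the target Heffter array, because Definition \ref{magic_pf} forces $|\Omega|=ms=nk$, whereas $|\Z_{2nk+1}\setminus\{0\}|=2nk$; to accommodate all inverse pairs you must double $s$ and $k$ (or $m$ and $n$), and then the deletion in stage two must remove exactly half of the filled cells in \emph{every} row and \emph{every} column simultaneously with the sum conditions — a constraint your sketch does not address. The opening block decomposition also has an arithmetic slip: with $d=\gcd(m,n)$, $m=dm'$, $n=dn'$, $s=n'\ell$, $k=m'\ell$, a $d\times d$ grid of blocks each carrying $\ell$ filled cells accounts for $d^2\ell$ cells, while the array must contain $nk=dm'n'\ell$ of them; these agree only when $d=m'n'$. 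In short, the conjecture remains open, and the proposal neither proves it nor matches anything proved in the paper.
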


A Heffter array $\H(m,n; s,k)$ is a $\Mo_\Omega(m,n;s,k)$ where $\Omega$ is a subset of size $nk$ of $\Z_{2nk+1}\setminus\{ 0 \}$ such that
$\Omega \cap -\Omega =\emptyset$.
In other words, $\Omega \cup -\Omega$ is a partition of  $\Z_{2nk+1}\setminus\{ 0 \}$.
In \cite{CMPP2} the authors proposed a generalization of the notion of Heffter array studying \emph{relative} Heffter arrays:
a relative Heffter array $\H_t(m,n;s,k)$ is a $\Mo_\Omega(m,n;s,k)$ where $\Omega\cup -\Omega$ is a partition of $\Z_{2nk+t}\setminus J$, with
$J$ being the subgroup of $\Z_{2nk+t}$ of size $t$. We refer to \cite{BCDY, CDY, CMPP1, CPP, DM, DP, MP3} for results about the existence and the properties of these objects.
Further generalizations of Heffter arrays are described in \cite{CD,CDP,CP,MP2}.
\smallskip

Magic and zero-sum magic partially filled arrays are worth to be studied not only because they generalize several combinatorial objects, 
as we previously explained, but also because of their connection with magic labelings (see \cite[Sections 5.1 and 5.7]{G}).
We briefly describe how these labelings can be obtained.

A bipartite biregular graph $G(V,E)$ is a graph whose vertex set can be written as disjoint union $V=V_1\cup V_2$ with
$|V_1|=m$, $|V_2|=n$, and where each vertex of $V_1$ is connected with exactly $s$ vertices of $V_2$,
and each vertex of $V_2$ is connected with exactly $k$ vertices of $V_1$.
Now, let $M$ be an $\M_{\Omega}(m,n;s,k)$ (or, a  $\Mo_{\Omega}(m,n;s,k)$). 
We associate to  $M$ a bipartite biregular graph $\Phi_M=G(V,E)$ by taking a set $V_1$ of $m$ points, a set $V_2$ of $n$ points, and drawing an edge 
$e_{i,j}$ between the $i$-th vertex of $V_1$ and the $j$-vertex of $V_2$ if the cell $(i,j)$ of $M$ is not empty. 
Define the labeling $f_M: E\to \Omega\subseteq \Gamma$, where $f_M(e_{i,j})$ is the entry of the corresponding cell $(i,j)$
of $M$. 

According to \cite{S}, a graph is \emph{magic} if there is a labeling of its edges with distinct positive integers
such that for each vertex $v$ the sum of the labels of all edges incident with $v$ is the same
for all $v$.  A such labeling is said to be a \emph{magic labeling}.
A magic labeling is called \emph{supermagic} if the set of edge labels consists of consecutive
positive integers. It is clear that a square $\MR(n,n;k,k)$, say $M$, produces a supermagic labeling of the graph $\Phi_M$, 
since $M$ has the same row and column sums.

Now, take a finite abelian group $\Gamma$ of order $\ell>1$, and write $\Gamma^*$ for $\Gamma \setminus \{0_\Gamma\}$.
A \emph{$\Gamma$-supermagic labeling} of a graph $G(V, E)$ with $|E| = \ell$ is a bijection from $E$ to 
$\Gamma$ such that the sum of labels of all incident edges of every vertex
$v \in V$ is equal to the same element $x \in \Gamma$, see \cite{FMMM}.
If $M$ is a $\Mo_\Gamma(m,n;s,k)$, then the function $f_M$ is a $\Gamma$-supermagic labeling of $\Phi_M$, where
the constant $x$ is $0_\Gamma$.

A graph $G$ is said to be \emph{zero-sum $\Gamma$-magic} if there exists a labeling of the edges of $G$ with 
elements of $\Gamma^*$ such that, for each vertex $v$, the sum of the labels of the edges incident with $v$ is equal to $0_\Gamma$, see \cite{ARZ}.
If $M$ is a $\Mo_{\Gamma^*}(m,n;s,k)$, then  $\Phi_M$ is a zero-sum $\Gamma$-magic graph.
Note that in this case the labeling $f_M$ is a bijection.

When the zero-sum magic array $\Mo_\Omega(m,n;s,k)$ is actually a Heffter array, there are further applications to cyclic cycle decompositions (see \cite{A});
when it is a tight $\Gamma$-magic rectangle, there are applications to cryptography, scheduling and statistical design of experiments 
(see \cite{CH} and
the references therein).
\smallskip

Finally, we briefly describe our main achievements. The first one, proved in Section \ref{rec}, extends Froncek's result
about tight magic rectangle sets, providing necessary and sufficient conditions
for the existence of an $\MRS(m,n;s,k;c)$.

\begin{thm}\label{main2}
Let  $m,n,s,k,c$ be five positive integers such that $2\leq s\leq n$, $2\leq k \leq m$ and $ms=nk$.
An $\MRS(m,n;s,k;c)$ exists if and only if either $nkc$ is odd, or $s$ and $k$ are both even and $sk>4$.
\end{thm}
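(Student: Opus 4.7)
The argument splits into necessity and sufficiency. For necessity, summing all entries of the $c$ arrays gives $\binom{nkc+1}{2}$; equating this with $mc\cdot x$ (where $x$ is the common row sum across all rows of all arrays) yields $x=s(nkc+1)/2$, and similarly $y=k(nkc+1)/2$. The integrality of $x$ and $y$ forces either $nkc$ to be odd, or both $s$ and $k$ to be even. When $s=k=2$, the constraint $ms=nk$ gives $m=n$ and the common row/column sum is $2nc+1$; thus each row and each column of every array is a complementary pair $\{i,2nc+1-i\}$ drawn from $\{1,\ldots,2nc\}$. If an entry $a$ occupies cell $(i,j)$ of some array, its complement $2nc+1-a$ must lie in row $i$ (to complete that row) and column $j$ (to complete that column) of the same array, which forces it to occupy the same cell as $a$, a contradiction. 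Hence $sk>4$ is necessary.

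For sufficiency I would split into the two admissible regimes. In the first regime $nkc$ is odd, so $m,n,s,k,c$ are all odd. The plan is to start from a single $\MR(m,n;s,k)$ with entries $\{1,\ldots,nk\}$ (drawing on the results of Khodkar--Leach or giving a direct construction), whose row and column sums are $s(nk+1)/2$ and $k(nk+1)/2$. To promote it to $c$ arrays with entries $\{1,\ldots,nkc\}$ and correct sums $s(nkc+1)/2$ and $k(nkc+1)/2$, I would label the filled cells of the base array with signs $\pm$ arranged so that every row and every column has the same signed sum; then in the $\alpha$-th copy an entry $A_{i,j}$ marked $+$ is replaced by $A_{i,j}+\alpha\cdot nk$, and one marked $-$ by $A_{i,j}+(c-1-\alpha)\cdot nk$. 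Since $s$ and $k$ are odd one must absorb a leftover sign in a designated cell of each row and column, and the oddness of $c$ then allows the row/column offsets to average out uniformly across the $c$ copies.

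In the second regime $s$ and $k$ are both even with $sk>4$. Here I would exploit the complementary-pair structure: the $nkc/2$ pairs $\{i,nkc+1-i\}$ each sum to $nkc+1$, and it suffices to assign pairs to filled cells so that each row of each array receives $s/2$ pairs and each column receives $k/2$ pairs, which automatically yields row sum $s(nkc+1)/2$ and column sum $k(nkc+1)/2$. The hypothesis $sk>4$ forces $s\geq 4$ or $k\geq 4$, providing enough room for such an assignment; the construction naturally subdivides into the cases $s\geq 4$ and $k\geq 4$; $s=2,k\geq 4$; and $s\geq 4,k=2$, each handled by an explicit placement of pairs followed by filling their ``partners''.

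The main obstacle I expect is the first regime. Without the pair-summing trick, matching the sums across $c$ shifted copies is delicate, and boundary parameters such as $s=k=3$ or small $m,n$ (where no convenient base $\MR(m,n;s,k)$ may be readily available) will probably demand separate \emph{ad hoc} constructions. The combinatorial analysis of exactly which quintuples $(m,n,s,k,c)$ with all odd values admit a simultaneously sign-balanced base array is likely the most intricate part of the proof.
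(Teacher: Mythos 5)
Your necessity argument is correct and complete: the divisibility forcing ($nkc$ odd or $s,k$ both even) matches the paper's Remark on row/column sums, and your complementary-pair contradiction for $s=k=2$ is a clean way to rule out that case (the paper merely asserts it is ``easy to see''). The sufficiency direction, however, is only a plan, and both regimes have genuine gaps.

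In the odd regime your scheme of signing the cells of a single base $\MR(m,n;s,k)$ and shifting by multiples of $nk$ runs into exactly the obstruction you half-acknowledge: with $s$ and $k$ odd, no row or column can carry equally many $+$ and $-$ labels, so the $c$ shifted copies cannot all have the same row and column sums by a per-cell offset argument, and ``absorbing a leftover sign'' so that things ``average out'' is not a proof. Moreover the base array $\MR(m,n;s,k)$ is itself the $c=1$ instance of the theorem, so the approach is circular unless you establish that case independently. The paper sidesteps all of this: for $d=\gcd(s,k)\geq 3$ it invokes Khodkar--Leach's diagonal $\MRS(\frac{nk}{d};d;c)$ together with a reduction theorem (Theorem \ref{sq->rt}) converting diagonal square arrays into rectangular ones; for $d=1$ it writes $m=kr$, $n=sr$ and tiles each of the $c$ arrays block-diagonally with $r$ tight magic rectangles drawn from Froncek's $\MRS(k,s;rc)$. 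In the even regime your prescription that ``each row receives $s/2$ pairs and each column receives $k/2$ pairs'' is self-contradictory as stated: a complementary pair placed in one row occupies two distinct columns, so it cannot simultaneously serve as a column pair, and the column sums do not follow automatically. What actually works (and what the paper does for $s,k\geq 3$) is to take a \emph{shiftable} signed magic array --- equal numbers of positive and negative entries in every row and column, existence quoted from prior work --- and translate positives and negatives by different amounts in each of the $c$ copies; the case $k=2$, $s\geq 4$ requires separate explicit $2$-diagonal block constructions ($U_x$, $V_x$, $W_x$) whose juxtaposition repairs the row sums. So the overall architecture of your proof would need to be rebuilt around existence results for diagonal/tight magic rectangle sets and signed magic arrays rather than around shifting a single base array.
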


Next, keeping in mind the connection with $\Gamma$-supermagic labelings and zero-sum $\Gamma$-magic graphs,
we will focus our attention on zero-sum magic partially filled arrays (sets) where $\Omega$ 
is a finite abelian group $\Gamma$, or the set $\Gamma^*$ of its nonzero elements.
In particular, we will prove the following result.

\begin{thm}\label{main}
Let  $m,n,s,k,c$ be five positive integers such that $2\leq s\leq n$, $2\leq k \leq m$ and $ms=nk$.
Set $d=\gcd(s,k)$.
\begin{itemize}
\item[(1)] A $\MoS_{\Z_{nkc}}(m,n;s,k;c)$ exists if and only if $nkc$ is odd.
\item[(2)] If  $d\equiv 0 \pmod 4$, then there exists a $\MoS_{\Gamma}(m,n;s,k;c)$ for every abelian group $\Gamma \in \G$ of order $nkc$.
\item[(3)] If $nk$ is odd and $\gcd( n, d - 1 )=1$,  
then there exists a $\Mo_{\Z_{d}\oplus \Z_{nk/d}}(m,n;s,k)$.
\item[(4)] A  $\MoS_{\Z_{2nc+1}^*}(2,n; n,2; c)$ exists if and only if $n\geq 3$.
\item[(5)] A $\Mo_{\Z_{mn+1}^\ast}(m,n;n,m)$ exists if and only if $mn>5$ is even.
\item[(6)] Let $\Gamma$ be an abelian group of order $2n+1$. There exists a tight $\Mo_{\Gamma^*}(2,n;n,2)$ if and only if 
$\Gamma \not\in \{\Z_5,\Z_3\oplus \Z_3\}$.
\end{itemize} 
\end{thm}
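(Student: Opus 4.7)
The plan is to split the six parts by whether $\Omega$ equals the whole group $\Gamma$ (parts (1)-(3)) or its nonzero elements $\Gamma^*$ (parts (4)-(6)). In each case the necessity comes from summing all entries, while the sufficiency requires explicit constructions, often piggybacking on Theorem \ref{main2} or on classical Heffter-array literature.

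For part (1), necessity is immediate: every row of every array sums to $0$, so $\sum_{g \in \Z_{nkc}} g = 0$, which holds iff $nkc$ is odd. For sufficiency when $nkc$ is odd, I would take the $\MRS(m,n;s,k;c)$ provided by Theorem \ref{main2} (with entries $\{1,\ldots,nkc\}\subset\Z$) and shift every entry by $-(nkc+1)/2$, an integer since $nkc$ is odd; a short computation shows that the common row sum $s(nkc+1)/2$ and column sum $k(nkc+1)/2$ both become $0$, the shifted entries $\{-(nkc-1)/2,\ldots,(nkc-1)/2\}$ form a complete residue system modulo $nkc$, and the result is exactly the desired $\MoS_{\Z_{nkc}}$. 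For part (2), with $4\mid d=\gcd(s,k)$, I would partition each row into blocks of $4$ cells and fill them with quadruples $\{a,b,-a,-b\}\subset\Gamma$; the hypothesis $\Gamma\in\G$ guarantees $\sum_{g\in\Gamma}g=0$ (so the element multiset matches across the $c$ arrays), and the divisibility $4\mid k$ lets one simultaneously close column sums. For part (3), with $nk$ odd and $\gcd(n,d-1)=1$, I would use a diagonal-shift rule on $\Z_d\oplus\Z_{nk/d}$, placing elements so that advancing one column shifts the second coordinate by a unit whose cyclicity along a row is guaranteed precisely by the coprimality hypothesis.

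For parts (4)-(6), the key structural observation is that in any $\Mo(2,n;n,2)$ the column-sum condition forces row $2$ to be the entry-wise negation of row $1$, so the construction reduces to selecting, in each array, $n$ pairs $\{g,-g\}\subset\Omega$ whose representatives sum to zero. For (4), necessity for $n\geq 3$ follows because when $n=2$ the two entries of row $1$ are distinct and sum to $0$ modulo $2nc+1$, forcing them to be $\pm a$; then row $2$ reuses the same pair, contradicting that each element of $\Omega$ appears once. Sufficiency for $n\geq 3$ would use an inductive $\pm$-signed partition of $\{1,\ldots,nc\}$ into $c$ zero-sum blocks of size $n$. Part (6) is the $c=1$ case over a general abelian group of order $2n+1$: the exceptions $\Z_5$ and $\Z_3\oplus\Z_3$ are ruled out by enumerating all systems of $\pm$-representatives (in $\Z_3\oplus\Z_3$ a parity-mod-$3$ argument on the four pair-generators excludes all $16$ signed selections), and for every other $\Gamma$ the existence is established by case analysis on the primary decomposition. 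For (5), necessity reduces to $\sum_{g\in\Z_{mn+1}^\ast}g=(mn+1)/2\not\equiv 0$ when $mn$ is odd, together with a hand check that $mn=4$ fails; sufficiency for $mn\geq 6$ even is obtained either by invoking Cichacz's $\Gamma$-magic rectangle constructions or by concatenating $2\times n$ blocks built in parts (4) and (6).

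The main obstacle I anticipate is part (2): simultaneously fitting quadruple-based blocks into the $m\times n$ shape, closing both row and column sums, and distributing $\Gamma$ exactly once across the $c$ arrays requires a delicate combinatorial fit, likely with separate subcases for $|\Gamma|$ odd and for $\Gamma$ having multiple involutions. Part (3) is also technically demanding because $d$ and $nk/d$ need not be coprime, so the ambient group does not split as a single cyclic group and the construction must be performed coordinate-wise throughout. The remaining parts are, by comparison, reductions to the established literature or finite hand verifications.
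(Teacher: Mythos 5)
Your overall architecture matches the paper's: parts (1), (4) and (5) are handled by shifting/reducing the $\MRS(m,n;s,k;c)$ of Theorem \ref{main2} or by the $2\times n$ observation that the zero column sums force row $2$ to be the negation of row $1$, and your necessity arguments (sum of all group elements, the $n=2$ obstruction, the $mn$ odd obstruction) are exactly the paper's. However, the constructive cores of parts (2), (3) and (6) have genuine gaps. The most serious is part (2): your plan to tile each row with quadruples $\{a,b,-a,-b\}$ cannot realize every element of $\Gamma$ exactly once, because $0_\Gamma$ and every involution are fixed by negation, so any quadruple containing one of them repeats an entry; and row-local zero-sum blocks give no control over column sums. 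The paper's solution has two ingredients you are missing. First, both (2) and (3) are \emph{not} built directly on the $m\times n$ shape: they are reduced to a \emph{diagonal square} array of order $nk/d$ with $d$ entries per row and column, which Theorem \ref{sq->rt} then converts into the required $\MoS_\Omega(m,n;s,k;c)$ — without this reduction your ``diagonal-shift rule'' for (3) has no way to reconcile $s$ filled cells per row with $k$ per column. Second, for (2) the paper writes $\Gamma=\Z_{2a}\oplus\Z_{2b}\oplus\Gamma'$ (possible because $\Gamma\in\G$ of even order has more than one involution) and uses $3\times 2$ blocks whose rows sum to $\pm([1]_{2a},[0]_{2b},0)$ and whose columns sum to $\pm([0]_{2a},[1]_{2b},0)$, i.e.\ the blocks are \emph{not} individually zero-sum; the cancellation happens between consecutive diagonals, and coverage of $\Gamma$ is a parity bookkeeping on the first two coordinates rather than a $\pm$-pairing.

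For part (6), ``case analysis on the primary decomposition'' cannot terminate without a composition device: the paper's Lemma \ref{noncic} manufactures a $\Mo_{(\Gamma_1\oplus\Gamma_2)^*}(2,a+b+2ab)$ from a $\Mo_{\Gamma_2^*}(2,b)$ whenever $|\Gamma_1|$ divides $|\Gamma_2|$, and the induction needs explicit seeds for the elementary abelian cases of exponent $3$ and $5$ (a $\Mo_{(\Z_3^{\,3})^*}(2,13)$ and a $\Mo_{(\Z_5^{\,2})^*}(2,12)$), none of which appear in your plan. Likewise, the sufficiency in part (4) for odd $n$ is not a free ``signed partition of $[1,nc]$'': the $c$ signed block sums must vanish modulo $2nc+1$, which is precisely a Heffter-array condition; the paper invokes an $\H(c,n;n,c)$ for $c\geq 3$ and gives ad hoc constructions for $c=1,2$. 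So while your roadmap is sound and your reductions for (1) and (5) would go through, the proposal as written does not yet contain the ideas (the square-diagonal reduction, the non-zero-sum $3\times2$ block, and the group-composition lemma) that actually make the hard cases work.
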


\section{Notation, examples and preliminary results}

Given two integers $a\leq b$, we denote by $[a,b]$ the set consisting of the integers $a,a+1,\ldots,b$.
If $a>b$, then $[a,b]$ is empty. We denote by $(i,j)$ the cell in the $i$-th row and $j$-th column 
of a partially filled array $A$, while $\E(A)$ denotes the \emph{list} of the entries of the filled cells of $A$.
We also write $\E(i,j)$ to indicate the entry of the cell $(i,j)$ of $A$.
Given a sequence $S=(B_1,B_2,\ldots,B_r)$ of partially filled arrays, we set $\E(S) = \cup_i \E(B_i)$.

We recall that a finite nontrivial abelian group can be written as  a direct sum 
$$ \Z_{n_1}\oplus \Z_{n_2}\oplus\ldots\oplus \Z_{n_\ell} $$
of cyclic groups $\Z_{n_i}$ of order $n_i>1$. In particular, we can take these integers $n_i$ in such a way that $n_i$ divides $n_{i+1}$
for all $i\in [1,\ell-1]$.
The elements of a cyclic group $(\Z_n,+)$ of order $n$ will be denoted by $[x]_n$. In other words, 
$[x]_n$ is the image of $x \in \Z$ by the canonical projection $\pi: \Z \to \Z_n$.
More in general, given a direct sum $\Gamma_1\oplus \Gamma_2\oplus \ldots \oplus \Gamma_\ell$ of abelian groups, its elements
will be denoted by $(x_1,x_2,\ldots,x_\ell)$, where $x_i \in \Gamma_i$  for all $i=1,2,\ldots,\ell$.

In the following, it will be convenient to denote an $\M_\Omega(n,n;k,k)$, an $\MS_\Omega(n,n;k,k;c)$, a $\Mo_\Omega(n,n;k,k)$ and 
a $\MoS_\Omega(n,n;k,k;c)$, respectively, by
$\M_\Omega(n;k)$, $\MS_\Omega(n;k;c)$,   $\Mo_\Omega(n;k)$ and  $\MoS_\Omega(n;k;c)$.
Furthermore, we denote a tight $\Mo_\Omega(m,n;n,m)$ by $\Mo_\Omega(m,n)$, 
and a  tight $\MoS_\Omega(m,n;n,m;c)$ by $\MoS_\Omega(m,n;c)$.

Here, some examples. The arrays
$$\begin{array}{|c|c|} \hline
[0]_4 & [1]_4 \\\hline
[3]_4 & [2]_4 \\\hline
\end{array} \equad 
\begin{array}{|c|c|} \hline
([0]_2,[0]_2) & ([0]_2,[1]_2) \\\hline
([1]_2,[0]_2) & ([1]_2,[1]_2) \\\hline
\end{array}$$
are two magic rectangles: on the left-hand side we have an $\M_{\Z_4}(2;2)$; 
on the right-hand side, we have an $\M_{\Z_2\oplus \Z_2}(2;2)$.
The arrays 
$$\begin{array}{|c|c|c|c|}\hline
([0]_2,[0]_6) & ([1]_2,[0]_6) & ([0]_2,[5]_6) & ([1]_2,[1]_6) \\\hline
([1]_2,[2]_6) & ([1]_2,[5]_6) & ([0]_2,[3]_6) & ([0]_2,[2]_6) \\\hline
([1]_2,[4]_6) & ([0]_2,[1]_6) & ([0]_2,[4]_6) & ([1]_2,[3]_6)\\\hline
\end{array}$$
and
$$\begin{array}{|c|c|c|c|}\hline
([0]_2,[0]_6) & ([0]_2,[2]_6) & ([0]_2,[4]_6) &         \\\hline
              & ([1]_2,[0]_6) & ([1]_2,[3]_6) & ([0]_2,[3]_6) \\\hline
([0]_2,[5]_6) &               & ([1]_2,[5]_6) & ([1]_2,[2]_6) \\\hline
([0]_2,[1]_6) & ([1]_2,[4]_6) &         & ([1]_2,[1]_6) \\\hline
\end{array}$$
are, respectively, a  $\Mo_\Gamma(3,4)$  and a $\Mo_\Gamma(4;3)$, where $\Gamma=\Z_2\oplus \Z_6$.

\begin{ex}
For any odd $n\geq 3$, the sum of the integers $1,2,\ldots,n$ is a multiple of $n$, being equal to $n \cdot \frac{n+1}{2}$. Then,
the array $A=(a_{i,j})$, defined by $a_{i,j}=([i]_n, [j]_n)$ for all $i,j\in  [1,n]$,
is a $\Mo_{\Z_n\oplus \Z_n}(n,n)$.
\end{ex}

We recall that, given an abelian group $\Gamma$, we set $\Gamma^\ast=\Gamma \setminus \{0_\Gamma\}$. 
Furthermore, $\G$ denotes the set of all finite abelian groups that either have odd order or contain more than one involution.
Since the sum of all the elements of  $\Gamma$ is equal to the sum of its involutions, it is easy to see that
$$\sum_{g \in \Gamma} g = \sum_{\substack{x\in \Gamma,\\ 2x=0}} x = \left\{\begin{array}{ll}
 \iota & \textrm{if } \Gamma \textrm{ has a unique involution } \iota,\\
 0 & \textrm{otherwise}.
 \end{array}\right.$$
It follows that if a $\MoS_\Omega(m,n; s,k;c)$ exists for some $\Omega\in\{\Gamma, \Gamma^\ast\}$, 
then $\Gamma\in \G$; in particular, $\Gamma$ cannot be a cyclic group of even order.
Moreover, if there exists a $\MoS_\Gamma(m,n;s,k;c)$, then either $|\Gamma|=nkc$ is odd or $nkc\equiv 0\pmod 4$.

The arrays 
$$\begin{array}{|c|c|c|c|}\hline
[1]_{13} & [2]_{13} & [10]_{13} & \\\hline
         & [3]_{13} & [4]_{13}  & [6]_{13} \\\hline
[5]_{13} &          & [12]_{13} & [9]_{13} \\\hline
[7]_{13} & [8]_{13} &           & [11]_{13} \\\hline
  \end{array} \equad 
\begin{array}{|c|c|c|c|c|c|}\hline
[1]_{13}  &    & [3]_{13}  &   & [9]_{13} &  \\ \hline
   & [2]_{13}  &    & [5]_{13} &   & [6]_{13} \\\hline 
[12]_{13} &    & [10]_{13} &   & [4]_{13} &  \\\hline
   & [11]_{13} &    & [8]_{13} &   & [7]_{13} \\\hline
\end{array}$$
are, respectively, a $\Mo_{\Z_{13}^\ast}(4;3)$ and a $\Mo_{\Z_{13}^\ast}(4,6; 3,2)$.

Let $A=(a_{i,j})$ be a partially filled square array of size $n$. We say that the
element $a_{i,j}$ belongs to the diagonal $D_{r}$ if $j-i\equiv r \pmod{n}$.
We say that $A$ is $\ell$-diagonal if the nonempty cells of $A$ are exactly those of $\ell$ consecutive diagonals.
In particular, if $A$ is an $\M_\Omega(n;k)$, then we say that $A$  is diagonal if
it is $k$-diagonal.
We also say that an $\MS_\Omega(n;k;c)$ is diagonal if every member of this set is a $k$-diagonal partially filled array
(similarly for a zero-sum magic array).
For instance, this is a diagonal  $\Mo_{\Z_{21}^\ast}(5;4)$:
$$\begin{array}{|c|c|c|c|c|}\hline	
 [1]_{21} & [19]_{21} &           &  [6]_{21} & [16]_{21} \\\hline
[20]_{21} &  [5]_{21} & [15]_{21} &           &  [2]_{21} \\\hline
[18]_{21} &  [4]_{21} &  [9]_{21} & [11]_{21} &           \\\hline
          & [14]_{21} &  [8]_{21} & [13]_{21} &  [7]_{21} \\\hline
[3]_{21}  &           & [10]_{21} & [12]_{21} & [17]_{21} \\\hline
\end{array}$$

The following theorem shows how it is possible to construct rectangular magic partially filled arrays starting from  diagonal square 
ones.
This result was actually proven in \cite{MP3} for Heffter arrays, but the proof can be easily adapted to the more general context of
magic partially filled array sets.

\begin{thm}\label{sq->rt}
Let $m,n,s,k,c$ be five positive integers such that $2\leq s \leq n$, $2\leq k\leq m$ and $ms=nk$.
Let $\Omega$ be a subset of size $nkc$ of an abelian group $\Gamma$.
Set $d=\gcd(s,k)$.
If  there exists a diagonal $\MS_\Omega\left( \frac{nk}{d}; d; c \right)$,
then there exists an  $\MS_\Omega ( m,n; s,k;c)$.
In particular, if  there exists a diagonal $\MoS_\Omega\left( \frac{nk}{d}; d;c \right)$,
then there exists a  $\MoS_\Omega ( m,n; s,k;c)$.
\end{thm}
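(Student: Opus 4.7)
My plan is to construct each rectangular array in the target $\MS_\Omega(m,n;s,k;c)$ by ``folding'' the corresponding diagonal square array in the hypothesized $\MS_\Omega(N;d;c)$, where $N:=nk/d$. Set $s':=s/d$ and $k':=k/d$, so that $\gcd(s',k')=1$ and $N=ms'=nk'$. A short computation using $ms=nk$ shows that $\gcd(m,n)=md/k=nd/s$, which is at least $d$ by the assumptions $k\leq m$ and $s\leq n$. For each input array $B$, whose filled cells lie on $d$ consecutive diagonals, define the folding map
\[
\phi(i',j') \;=\; \bigl(((i'-1)\bmod m)+1,\ ((j'-1)\bmod n)+1\bigr), \qquad (i',j')\in[1,N]^2,
\]
and construct the rectangle $B'$ of size $m\times n$ by placing the entry of each filled cell $(i',j')$ of $B$ at position $\phi(i',j')$ and leaving the remaining cells empty.

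The crux is to verify that $\phi$ is injective on the filled cells of $B$. Suppose $(i_1,j_1)\neq(i_2,j_2)$ are both filled and share an image under $\phi$. Then $i_2-i_1=am$ and $j_2-j_1=bn$ for some integers $a,b$, while the diagonal indices $r_1,r_2$ (defined by $j_\ell-i_\ell\equiv r_\ell\pmod N$) satisfy $|r_1-r_2|\leq d-1$ since the filled diagonals are consecutive. Subtracting yields $bn-am\equiv r_2-r_1\pmod N$. Because $\gcd(m,n)$ divides each of $m$, $n$, and $N$, it also divides $r_2-r_1$; but $|r_2-r_1|<d\leq\gcd(m,n)$ forces $r_1=r_2$. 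The remaining congruence $bn\equiv am\pmod N$ reduces to $bs'-ak'\equiv 0\pmod{s'k'}$, and $\gcd(s',k')=1$ then gives $s'\mid a$ and $k'\mid b$. Consequently $i_2-i_1$ and $j_2-j_1$ are integer multiples of $N$; since both differences lie in $(-N,N)$, they vanish, forcing $(i_1,j_1)=(i_2,j_2)$, a contradiction.

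With injectivity in hand, the remaining verifications are routine. Row $i$ of $B'$ receives the $d$ filled entries of each of the $s'=N/m$ square rows congruent to $i$ modulo $m$, giving $s$ entries in total; symmetrically every column of $B'$ holds $k$ entries. The row and column sums of $B'$ equal $s'x$ and $k'y$, where $x,y$ are the common row and column sums of $B$, so Definition~\ref{magic_pf}(c) holds; when $x=y=0$ this transfers directly to zero-sums in $B'$. Performing the construction array-by-array, and noting that $\phi$ merely rearranges the entries of each $B$ without duplication, preserves condition (a) of Definition~\ref{magic_pf_set} across the whole set and yields the desired $\MS_\Omega(m,n;s,k;c)$ (respectively, $\MoS_\Omega(m,n;s,k;c)$). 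The only substantive step is the injectivity argument, which hinges on the interplay between the coprimality $\gcd(s',k')=1$ and the lower bound $\gcd(m,n)\geq d$ coming from $k\leq m$ and $s\leq n$.
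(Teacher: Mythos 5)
Your proof is correct, and it is essentially the same argument the paper relies on: the paper omits the proof and refers to the reduction theorem of \cite{MP3}, whose construction is exactly your folding of the $N\times N$ diagonal array onto an $m\times n$ rectangle via reduction of row indices modulo $m$ and column indices modulo $n$. Your injectivity verification (combining $\gcd(s/d,k/d)=1$ with the bound $\gcd(m,n)=md/k\geq d$ forced by $k\leq m$ and $s\leq n$, which is where the ``$d$ consecutive diagonals'' hypothesis enters) is precisely the substantive point of that reduction, and you have it right.
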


Sets of zero-sum magic partially filled arrays can be constructed using a sort of Kronecker product.
For $i=1,2$, let $A_i$ be a partially filled array of size $a_i\times b_i$ whose elements belong to an abelian group $\Gamma_i$.
We define the ``Kronecker product'' between $A_1$ and $A_2$ as the partially filled array $A_1\otimes A_2$ of size $(a_1a_2)\times (b_1b_2)$ obtained as follows.
Set $A_1=(u_{i,j})$ and $A_2=(v_{x,y})$, and take $\ell,h$ in such a way that $\ell \in [1,a_1a_2]$ and $h\in [1,b_1b_2]$.
Write $\ell=q_1 a_2+ r_1$ and $h=q_2 b_2 + r_2$ with $1\leq r_1 \leq a_2$ and $1\leq r_2 \leq b_2$.
Then, the cell $(\ell, h)$ of $A_1\otimes A_2$ is nonempty if and only if the cell $(q_1+1,q_2+1)$ of $A_1$ and the cell $(r_1,r_2)$ of $A_2$ are nonempty. In such case,
$\E(\ell,h)=(u_{q_1+1,q_2+1}, v_{r_1,r_2}) \in\Gamma_1\oplus \Gamma_2$.

For instance, take the following partially filled arrays:
$$A_1=\begin{array}{|c|c|c|}\hline
0 &  & 1 \\\hline
2 &  3 & \\\hline
  \end{array}, \quad
  A_2=\begin{array}{|c|c|c|c|}\hline
4 & 5 &  &  \\\hline
  &  & 6 & 7\\\hline
8 &   &   &  9 \\\hline
  \end{array}.$$
Then, the product $A_1\otimes A_2$ is
$$\begin{array}{|c|c|c|c|c|c|c|c|c|c|c|c|}\hline
(0,4) &  (0,5) &       &       &       &       &       &       & (1,4) & (1,5) &       &       \\\hline 
      &        & (0,6) & (0,7) &       &       &       &       &       &       & (1,6) & (1,7) \\\hline
(0,8) &        &       & (0,9) &       &       &       &       & (1,8) &       &       & (1,9) \\\hline
(2,4) &  (2,5) &       &       & (3,4) & (3,5) &       &       &       &       &       &       \\\hline
      &        & (2,6) & (2,7) &       &       & (3,6) & (3,7) &       &       &       &       \\\hline
(2,8) &        &       & (2,9) &       &       &       & (3,9) &       &       &       &       \\\hline
 \end{array}.$$

\begin{lem}
If there exist a $\MoS_{\Omega_1}(m_1,n_1; s_1, k_1; c_1)$ and a  $\MoS_{\Omega_2}(m_2,n_2; s_2,k_2; c_2)$,
where $\Omega_1\subseteq \Gamma_1$ and $\Omega_2\subseteq \Gamma_2$, then there exists a $\MoS_{\Omega_1\times \Omega_2}(m_1m_2, n_1n_2; s_1s_2, k_1k_2; c_1c_2)$,
where $\Omega_1\times \Omega_2\subseteq \Gamma_1\oplus \Gamma_2$.
\end{lem}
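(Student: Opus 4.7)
The plan is to use the Kronecker product construction defined immediately before the lemma, applied to every pair of arrays drawn from the two given sets. If $S_1 = \{A_1^{(1)},\ldots,A_1^{(c_1)}\}$ is the first $\MoS$ and $S_2 = \{A_2^{(1)},\ldots,A_2^{(c_2)}\}$ the second, I would form the family $\{A_1^{(i)}\otimes A_2^{(j)} : i\in[1,c_1],\ j\in[1,c_2]\}$ of $c_1c_2$ partially filled arrays of size $(m_1m_2)\times(n_1n_2)$, and show that this family is a $\MoS_{\Omega_1\times\Omega_2}(m_1m_2,n_1n_2;s_1s_2,k_1k_2;c_1c_2)$.

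For axiom (a), I would fix $(\omega_1,\omega_2)\in\Omega_1\times\Omega_2$. By hypothesis, $\omega_1$ appears in a unique cell of a unique $A_1^{(i_0)}$, and $\omega_2$ appears in a unique cell of a unique $A_2^{(j_0)}$. By the very definition of $\otimes$, the pair $(\omega_1,\omega_2)$ then occurs in a unique cell of $A_1^{(i_0)}\otimes A_2^{(j_0)}$, and in no other array of the family. For axiom (b), note that the cells of row $\ell = q_1 a_2 + r_1$ of $A_1^{(i)}\otimes A_2^{(j)}$ that are filled are in bijection with pairs consisting of a filled cell in row $q_1+1$ of $A_1^{(i)}$ and a filled cell in row $r_1$ of $A_2^{(j)}$; the count is thus $s_1\cdot s_2$. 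The column count $k_1 k_2$ is analogous.

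The key step is axiom (c). Fix a row $\ell = q_1 a_2 + r_1$ of $A_1^{(i)}\otimes A_2^{(j)}$ and compute its sum componentwise in $\Gamma_1\oplus\Gamma_2$. Writing $A_1^{(i)}=(u_{p,q})$ and $A_2^{(j)}=(v_{x,y})$, the sum is
\begin{equation*}
\sum_{\substack{(q_1+1,q_2+1)\in A_1^{(i)} \text{ filled}\\ (r_1,r_2)\in A_2^{(j)} \text{ filled}}} \bigl(u_{q_1+1,q_2+1},\ v_{r_1,r_2}\bigr).
\end{equation*}
For the first coordinate, for each fixed column index $q_2+1$ with $(q_1+1,q_2+1)$ filled in $A_1^{(i)}$, there are exactly $s_2$ choices of $r_2$ with $(r_1,r_2)$ filled in $A_2^{(j)}$. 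Hence the first coordinate equals
\begin{equation*}
s_2\cdot\!\!\sum_{q_2:\,(q_1+1,q_2+1)\text{ filled}}\!\! u_{q_1+1,q_2+1} \;=\; s_2\cdot 0_{\Gamma_1} \;=\; 0_{\Gamma_1},
\end{equation*}
since the row sum of $A_1^{(i)}$ vanishes by the zero-sum property. The second coordinate vanishes symmetrically, giving row sum $0_{\Gamma_1\oplus\Gamma_2}$. The identical argument for columns completes axiom (c).

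I do not anticipate any serious obstacle: the construction is essentially bookkeeping. The only subtle point is keeping track of which coordinate is multiplied by which cardinality, and this is handled cleanly by exploiting that the abelian-group sum commutes with iterating over the two factors and that multiplying $0$ by any integer still yields $0$, so no arithmetic hypotheses on $\Gamma_1,\Gamma_2$ are required beyond those already built into the notion of $\MoS$.
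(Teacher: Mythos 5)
Your proposal is correct and follows essentially the same route as the paper: form all $c_1c_2$ Kronecker products, verify the filled-cell counts $s_1s_2$ and $k_1k_2$, observe that each row (resp.\ column) sum is $(s_2\cdot 0_{\Gamma_1},\, s_1\cdot 0_{\Gamma_2})=0_\Gamma$ (resp.\ $(k_2\cdot 0_{\Gamma_1},\, k_1\cdot 0_{\Gamma_2})=0_\Gamma$), and note that the entries of the family exhaust $\Omega_1\times\Omega_2$ exactly once. The only difference is that you spell out the bookkeeping in somewhat more detail than the paper does.
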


\begin{proof}
Let $A_i$ be an array of the set $\MoS_{\Omega_1}(m_1,n_1; s_1, k_1; c_1)$ and $B_j$ be an array of the set $\MoS_{\Omega_2}(m_2,n_2; s_2,k_2; c_2)$.
Then $C=A_i\otimes B_j$ is a partially filled array of size $(m_1m_2)\times (n_1n_2)$ such that: 
every row of $C$ contains $s_1s_2$ elements  and each column of 
$C$ contains $k_1k_2$ elements; the elements of each row of $C$ sum to $(s_2\cdot 0_{\Gamma_1}, s_1\cdot 0_{\Gamma_2} )=0_\Gamma$; the elements of each column of $C$ sum to $(k_2\cdot 0_{\Gamma_1}, k_1\cdot 0_{\Gamma_2} )=0_\Gamma$; $\E(C)=\{(x,y)\mid x \in \E(A_i), y \in \E(B_j)\}$.
We conclude that the set $\{A_i\otimes B_j\mid i \in [1,c_1], j \in [1,c_2] \}$ is a $\MoS_{\Omega_1\times \Omega_2}(m_1m_2, n_1n_2; s_1s_2, k_1k_2; c_1c_2)$.
\end{proof}

\section{Magic rectangle sets with empty cells}\label{rec}

In this section we solve the existence problem of a magic rectangle set with empty cells.
Our starting point is Froncek's result about (tight) magic rectangle sets.
As we recalled in Section \ref{intro}, Froncek studied these objects, proving the following.

\begin{thm}\cite[Theorem 3.2]{F3}\label{Fron}
Let $m,n,c$ be positive integers such that $m,n\geq 2$.
A magic rectangle set $\MRS(m, n; n,m; c)$ exists if and only if either $mnc$ is odd, or $m,n$ are both even and $mn>4$ ($c$ arbitrary).
\end{thm}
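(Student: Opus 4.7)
The plan is to prove necessity and sufficiency separately. For \emph{necessity}, the $mnc$ entries of $\{1,\ldots,mnc\}$ sum to $mnc(mnc+1)/2$, and distributing this total equally across the $cm$ rows (of common sum $x$) and the $cn$ columns (of common sum $y$) gives
\begin{equation*}
x = \frac{n(mnc+1)}{2}, \qquad y = \frac{m(mnc+1)}{2}.
\end{equation*}
Integrality of $x$ and $y$ forces either $mnc$ odd (so $mnc+1$ is even) or both $m$ and $n$ even; the restriction $mn>4$ rules out only $m=n=2$, since in a $2\times 2$ array with equal row and column sums the top-row sum equals the first-column sum, so the top-right entry equals the bottom-left entry, contradicting the distinctness of entries within any one of the $c$ arrays.

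For \emph{sufficiency} the key device is the complementary pair $\{t,\,mnc+1-t\}$ for $t\in[1,\lfloor mnc/2\rfloor]$, each of which sums to the constant $mnc+1$. The overall strategy is (a) to distribute pairs across the $c$ arrays so that each array receives entries of the correct total sum $mn(mnc+1)/2$, and (b) to arrange these entries within each array so that every row sums to $x$ and every column to $y$. In the even case ($m,n$ both even, $mn>4$) the base $c=1$ is the classical $\MR(m,n)$ from Harmuth's theorem, and the step $c\mapsto c+1$ would be executed by appending one new array built from an additional block of $mn$ entries and then ``rebalancing'' the previous $c$ arrays by swapping complementary entries across arrays so that all $c+1$ arrays attain the enlarged common sums. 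In the odd case ($m,n,c$ all odd) pure pair-partitioning within a single array is impossible since $mn$ is odd, and I would instead build the arrays directly via a cyclic formula $f\colon[1,m]\times[1,n]\times[1,c]\to[1,mnc]$ of Latin-square type whose every slice $f(\cdot,\cdot,k)$ is itself a magic rectangle with the prescribed $x$ and $y$.

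The main obstacle is managing the row and column sums across all $c$ arrays simultaneously. In the even case the ``swap'' step for going from $c$ to $c+1$ must preserve magicity of all previously constructed arrays while also making the new one magic: the row and column sums have to grow uniformly in $c$, and this must be realised as a precise rearrangement of complementary pairs, which is the technical heart of the argument. In the odd case the cyclic bijection $f$ must satisfy three constraints at once -- bijectivity onto $[1,mnc]$, constant row sums, and constant column sums -- and this is straightforward via the Chinese Remainder Theorem when $\gcd(m,n)=\gcd(m,c)=\gcd(n,c)=1$ but requires an explicit case analysis otherwise. The remaining small exceptional triples (such as $m=n=c=3$, or $m=2$ with $n$ even and $c$ small) would be handled by direct constructions in the spirit of Froncek's original paper, which I would invoke as base cases of the inductions above.
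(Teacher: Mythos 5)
This statement is quoted in the paper as Froncek's Theorem 3.2 of \cite{F3} and is used as a black box; the paper contains no proof of it, so there is no internal argument to compare against. Judged on its own, your necessity half is complete and correct: the counting gives $x=\frac{n(mnc+1)}{2}$ and $y=\frac{m(mnc+1)}{2}$, integrality forces $mnc$ odd or $m,n$ both even, and your $2\times 2$ observation (top-right entry forced to equal bottom-left entry) correctly excludes $mn=4$.

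The sufficiency half, however, is only a plan, and the plan has a concrete defect. In the even case, the inductive step from $c$ to $c+1$ cannot be realized by ``swapping complementary entries across arrays'': the entry set must grow from $[1,mnc]$ to $[1,mn(c+1)]$ and the common row sum must increase from $\frac{n(mnc+1)}{2}$ to $\frac{n(mn(c+1)+1)}{2}$, so the entries of the old arrays must actually change value rather than merely be permuted among themselves; and if instead you leave the old arrays intact and give the new array the fresh block $[mnc+1,mn(c+1)]$, its row sums come out to $\frac{n(mn(2c+1)+1)}{2}$, which is wrong for every $c\geq 1$. The standard repair (used in this paper for the non-tight case, see the proof of Proposition \ref{magic_rect}) is to start from a \emph{shiftable} signed magic array and translate its positive and negative halves by $c$ different offsets, which produces all $c$ arrays simultaneously with no induction. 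In the odd case, your Chinese-Remainder formula covers only the pairwise-coprime situation, which already excludes all square cases $m=n$, and the promised ``explicit case analysis otherwise'' together with the ``direct constructions for exceptional triples'' is precisely the technical content of Froncek's proof and is not supplied. As written, the proposal establishes one implication and defers the other.
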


\begin{rem}\label{nec_rec}
Suppose that an $\MRS(m,n; s,k; c)$ exists. 
Then, for every array of this set, the elements of each row and each column sum, respectively, to $\frac{s(nkc+1)}{2}$ 
and to $\frac{k(nkc+1)}{2}$. So, if $nkc$ is even, then $s$ and $k$ must be both even.
\end{rem}

First, we consider the case when $k$ (or $s$) is equal to $2$.

\begin{prop}\label{rec2}
Let  $m,n,s,c$ be four positive integers such that $2\leq s\leq n$  and $ms=2n$.
An $\MRS(m,n; s,2; c)$  exists if and only if $s\geq 4$ is even. 
\end{prop}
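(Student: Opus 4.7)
My plan splits into necessity (showing $s\ge 4$ even is required) and sufficiency (constructing $\MRS(m,n;s,2;c)$ for such $s$).

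For necessity, the row sum $s(2nc+1)/2$ must be an integer, forcing $s$ to be even since $2nc+1$ is odd (cf.\ Remark~\ref{nec_rec}). To rule out $s=2$, suppose an $\MRS(m,n;2,2;c)$ exists. Then $ms=2n$ gives $m=n$, and both row and column sums equal $T:=2nc+1$. The set $[1,2nc]$ partitions into exactly $nc$ complementary pairs $\{a,T-a\}$; since every row (resp.\ column) is two entries summing to $T$ (and hence is one such pair), counting the $nc$ rows and $nc$ columns across the $c$ arrays against the $nc$ pairs shows that each pair fills exactly one row and exactly one column. I would then fix a row whose entries are $\{a,T-a\}$ in distinct columns $j_1,j_2$ of some array $A$: the second entry of column $j_1$ must complement $a$ to $T$, so equals $T-a$; hence column $j_1$ (and similarly $j_2$) also equals the pair $\{a,T-a\}$, contradicting its unique occurrence among the $nc$ columns.

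For sufficiency, fix $s\ge 4$ even and set $T:=2nc+1$. Observe first that Theorem~\ref{sq->rt} does \emph{not} apply here: it would reduce the problem to a diagonal $\MS_{[1,2nc]}(n;2;c)$, i.e.\ to an $\MRS(n,n;2,2;c)$, which the necessity argument above has just excluded. The plan is instead direct: since $k=2$, every column of every array is a complementary pair $\{a,T-a\}$ split across two of the $m$ rows, so constructing an $\MRS(m,n;s,2;c)$ amounts to (i) prescribing an incidence pattern of filled cells for each of the $c$ arrays (equivalently, an $s$-regular $n$-edge multigraph on $m$ row-vertices); (ii) partitioning the $nc$ complementary pairs among the $c$ arrays ($n$ per array); and (iii) orienting each pair (which of $a, T-a$ goes in which row) so that every row of every array sums to $sT/2$. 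For $c=1$ I would exhibit $\MR(m,n;s,2)$ explicitly via a cyclic incidence pattern, placing pair $\{i,2n+1-i\}$ in column $i$ and solving the resulting orientation system in $\{\pm 1\}^n$, which is solvable thanks to the slack afforded by $s\ge 4$. For $c\ge 2$ I would extend by composing $c$ coordinated base blocks whose labels are shifted and orientations paired so that the union covers $[1,2nc]$ with all row sums equal to $sT/2$.

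The hard part will be producing a uniform construction for all parameter combinations, particularly when $m=2n/s$ is odd (i.e.\ $s\nmid n$). In that regime one cannot simply reduce to a tight magic rectangle $\MR(s,m)$, since $\MR(s,m)$ requires $s\equiv m\pmod 2$. Here I expect a hybrid explicit construction—for example, filling an even-parity $(m-1)\times n$ sub-rectangle from known $\MR$-arrays and appending a compensating extra row—and verifying directly that all row and column sums come out right. A further subtlety for $c\ge 2$ is to ensure the joint orientation system over all $c$ arrays is solvable and that no complementary pair ends up split between two different arrays.
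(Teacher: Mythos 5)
Your necessity argument is sound and in fact more detailed than the paper's, which simply asserts that an $\MRS(n,n;2,2;c)$ is easily seen not to exist: once each row and each column is forced to be a complementary pair $\{a,T-a\}$, the entry $T-a$ would have to occupy two distinct cells, contradicting that every integer of $[1,2nc]$ appears exactly once. Your observation that Theorem \ref{sq->rt} cannot be invoked when $k=2$ is also correct and is precisely why the paper treats this case separately.

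The sufficiency direction, however, has a genuine gap: it is a plan, not a proof, and the plan stops exactly where the work begins. Reducing the problem to ``choose an $s$-regular incidence pattern, distribute the $nc$ complementary pairs among the arrays, and solve an orientation system in $\{\pm1\}^n$'' is a correct reformulation, but the claim that this system ``is solvable thanks to the slack afforded by $s\ge 4$'' is unsubstantiated: the row conditions $\sum_{j}\epsilon_j(2a_j-T)=0$ couple the incidence pattern, the assignment of pairs to columns, and the signs, and nothing in the proposal exhibits a simultaneous solution for all admissible $m,n,s,c$, nor shows that the $c$ arrays can be coordinated so that their entries exactly cover $[1,2nc]$. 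The paper resolves this with explicit $2$-diagonal $m\times m$ blocks $U_x$ and $V_x$, each using the entries $[x+1,x+m]\cup[N-(x+m),N-(x+1)]$ with all column sums equal to $N=2nc+1$; their juxtaposition $A_x$ has all row sums equal to $2N$ because the row-sum defects of $U_x$ and $V_{x+m}$ cancel. When $\bar s=s/2$ is even one concatenates $\bar s/2$ such blocks per array, while for $\bar s$ odd an extra block $W_x$ using only odd offsets is required, and the leftover odd and even entries must then be paired across consecutive arrays $R_{2\ell}$ and $R_{2\ell+1}$. This last case, which your proposal flags only as a ``subtlety'' for the regime $s\nmid n$, is where most of the actual construction and verification lies; without some such explicit construction the existence half of the statement is not proved.
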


\begin{proof}
Set $N=2nc+1$.
Suppose that an $\MRS(m,n; s,2; c)$ exists.
By Remark \ref{nec_rec},  $s$ must be even.
Furthermore, it is easy to see that an $\MRS(n,n; 2,2; c)$ does not exist. We conclude that $s$ must be an even
integer greater than $2$.

Now, suppose that $s\geq 4$ is even and write $s=2\bar s$. Hence, we have $n=m\bar s$.
Our construction of an  $\MRS(m, m \bar s; 2 \bar s,2; c)$ depends on the parity of $\bar s$, and uses some basic blocks.
Given an integer $x\geq 0$, we construct two $2$-diagonal $m\times m$ partially filled arrays $U_x, V_x$:
$$\begin{array}{rcl}
U_x & =& \begin{array}{|c|c|c|c|c|}\hline
x+1     & N-(x+2) &          &          &  \\\hline
        & x+2     & N- (x+3) &          &  \\\hline
        &         & \ddots   &  \ddots  &  \\\hline
        &         &          &  x+(m-1) &  N-(x+m)\\\hline
N-(x+1) &         &          &          &  x+m \\\hline
 \end{array}, \\ \\[-8pt]
V_x & =& \begin{array}{|c|c|c|c|c|}\hline
x+m     & N-(x+m-1) &          &          &  \\\hline
        & x+(m-1) & N- (x+m-2) &          &  \\\hline
        &         & \ddots   &  \ddots  &  \\\hline
        &         &          &  x+2 &  N-(x+1)\\\hline
N-(x+m) &         &          &          &  x+1 \\\hline
 \end{array}.
 \end{array}$$
In both cases, the elements of each column sum to $N$; the row sums of $U_x$ are $(N-1, \ldots, N-1, N+m-1)$, while
the row sums of $V_x$ are $(N+1,\ldots, N+1, N-m+1)$.
Furthermore, $\E(U_x)=\E(V_x)=[x+1, x+m]\cup [N-(x+m), N-(x+1)]$.
Now, let $A_x$ be the partially filled array obtained by the juxtaposition of $U_x$ and $V_{x+m}$.
Then, $A_x$ is an $\M_\Omega(m,2m; 4,2)$, where $\Omega=\E(A_x)=[x+1,x+2m]\cup [N-(x+2m), N-(x+1)]$, 
the elements of every column  sum to $N$, and the elements of every row sum to $2N$.

Assume $\bar s= 2t$ with $t\geq 1$. For every $\ell\geq 0$, let $R_\ell$ be the  $m\times n$ partially filled array obtained by the
juxtaposition of $A_{2m (t\ell)}, A_{2m (t\ell+1)}, \ldots,A_{2m(t\ell+t-1)}$.
By construction, every column of $R_\ell$ contains $2$ filled cells, and every row contains $4 t=s$ filled cells.
The elements of each column sum to $N$, while the elements of each row sum to $2 N t = N\bar s$. 
Furthermore, 
$$\E(R_\ell)=[m\bar s \ell+1, m\bar s (\ell+1)] \cup [N-m\bar s(\ell+1), N-(m\bar s \ell+1)].$$
Our  $\MRS(m,n; s,2; c)$ consists of $R_0, R_1, \ldots, R_{c-1}$.
In fact,
$$\bigcup_{\ell=0}^{c-1} \E(R_\ell)=[1, m\bar s  c] \cup [N- m\bar s c, N-1]=[1,nc]\cup [nc+1,2nc]=[1,2nc].$$

Next, assume $\bar s= 2t+1$ with $t\geq 1$. Given an integer $x\geq 0$, we construct a $2$-diagonal $m\times m$ partially filled array
$W_x$ as follows:
$$W_x = \begin{array}{|c|c|c|c|c|}\hline
x+(2m-1)   & N-(x+2m-3) &             &          &  \\\hline
           & x+(2m-3)   & N- (x+2m-5) &          &  \\\hline
           &            & \ddots      &  \ddots  &  \\\hline
           &            &             &  x+3     &  N-(x+1)\\\hline
N-(x+2m-1) &        &             &          &  x+1 \\\hline
 \end{array}.$$
The elements of each column of $W_x$ sum to $N$, while the row sums are $(N+2,\ldots,N+2, N+2-2m)$.
Furthermore, $\E(W_x)=\{x+1, x+3, \ldots, x+(2m-1)\}\cup \{N-(x+2m-1), N-(x+2m-3), \ldots, N-(x+1)\}$.

Let $Y_x,\widetilde Y_x$  be the partially filled arrays obtained by the juxtaposition of $U_x, U_{x+m}, W_{x+2m}$
and of  $W_{x+1}, U_{x+2m}, U_{x+3m}$, respectively.
Then, $Y_x$ and $\widetilde Y_x$ are two $\M_\Omega(m,3m; 6,2)$, where
$$\begin{array}{rcl}
\Omega=\E(Y_x) & =& [x+1, x+2m]\cup [N-(x+2m), N-(x+1)] \\
&& \cup \{x+2m+1, x+2m+3, \ldots, x+4m-1\}\\
&& \cup \{N-(x+4m-1), N-(x+4m-3), \ldots, N-(x+2m+1)\},\\
\Omega=\E(\widetilde Y_x) & =& \{x+2, x+4, \ldots, x+2m\}\cup 
\{N-(x+2m), N-(x+2m-2), \ldots,\\
&& N-(x+2)\} \cup [x+2m+1, x+4m] \cup [N-(x+4m), N-(x+2m+1)].   
  \end{array}$$
In both cases,  the elements of each row and each column  sum, respectively, to $3N$ and to $N$.

For every $\ell\geq 0$, let $R_{2\ell}, R_{2\ell+1}$ be the  $m\times n$ partially filled arrays obtained by the
juxtaposition, respectively, of 
$$A_{2m(2t+1)\ell}, A_{2m ((2t+1)\ell+1)}, \ldots,A_{2m((2t+1)\ell+t-2)}, Y_{2m((2t+1)\ell+t-1)},$$
and of 
$$\widetilde Y_{2m((2t+1)\ell+t)}, A_{2m((2t+1)\ell+t+2)}, A_{2m((2t+1)\ell+t+3)},\ldots,A_{2m((2t+1)\ell+2t)}.$$
In both cases,  every column contains $2$ filled cells and every row contains $4 (t-1)+6= s$ filled cells;
the elements of each column sum to $N$, while the elements of each row sum to $2N (t-1)+3N = N\bar s$. 
Furthermore, 

$$\begin{array}{rcl}
\E(R_{2\ell}) & =& [ms \ell + 1,  ms\ell + 2mt]\cup [N-(ms\ell + 2mt), N-(ms \ell + 1)] \\
& & \cup \{ms\ell + 2mt+1, ms\ell + 2mt+3, \ldots, ms\ell + 2mt+ 2m-1 \}\\
&& \cup \{N-( ms\ell + 2mt+ 2m-1), N-( ms\ell + 2mt+ 2m-3),\ldots, \\
&& N-(ms\ell + 2mt+1)\},\\
\E(R_{2\ell+1}) & =&  \{ms\ell + 2mt+2, ms\ell + 2mt+4, \ldots, ms\ell + 2mt+ 2m   \}\\
&&\cup \{N-( ms\ell + 2mt+ 2m), N-( ms\ell + 2mt+ 2m-2),\ldots, \\
&& N-(ms\ell + 2mt+2)\}\cup [ms \ell +2m(t+1) + 1, ms(\ell+1)]\\
&& \cup [N-ms(\ell+1) , N-(ms \ell +2m(t+1) + 1) ].
  \end{array}$$ 
Note that
$$\E(R_{2\ell})\cup \E(R_{2\ell+1})=[ms\ell+1, ms(\ell+1)]\cup [N-ms(\ell+1),N-(ms\ell+1)].$$
Our  $\MRS(m,n; s,2; c)$ consists of $R_0, R_1, \ldots, R_{c-1}$.
In fact, if $c$ is even, then
$$\bigcup_{\ell=0}^{(c-2)/2} \left(\E(R_{2\ell})\cup \E(R_{2\ell+1})\right)=[1, m\bar s  c] \cup [N- m\bar s c, N-1]=[1,2nc].$$
If $c$ is odd, then $\E(R_{c-1})= [m\bar s (c-1)+1, m\bar s (c+1)]$ and
$$\begin{array}{rcl}
\E(R_{c-1})\cup \bigcup\limits_{\ell=0}^{(c-3)/2} \left(\E(R_{2\ell})\cup \E(R_{2\ell+1})\right) & =& 
[m\bar s (c-1)+1, m\bar s (c+1)] \cup [1, m\bar s(c-1) ]\\
&& \cup [m\bar s (c+1)+1, 2m\bar s c]\\
& =& [1,2nc].   
  \end{array}$$
  This concludes our proof.
\end{proof}

As recalled in Section \ref{intro}, a signed magic array $\SMA(m,n;s,k)$ is a $\Mo_\Omega(m,n; s,k)$, where
$\Omega=\left[-\frac{nk-1}{2}, +\frac{nk-1}{2}\right]$ if $nk$ is odd or 
$\Omega=\left[-\frac{nk}{2}, -1\right]\cup  \left[1, \frac{nk}{2}\right]$ if $nk$ is even.

\begin{prop}\label{magic_rect}
Let  $m,n,s,k,c$ be five positive integers such that $3\leq s\leq n$, $3\leq k \leq m$ and $ms=nk$.
There exists an $\MRS(m,n;s,k;c)$ if and only if either $nkc$ is odd, or $s$ and $k$ are both even.
\end{prop}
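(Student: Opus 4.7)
\emph{Necessity.} This direction is immediate from Remark \ref{nec_rec}: if $nkc$ is even then $nkc + 1$ is odd, so the common row sum $s(nkc+1)/2$ and column sum $k(nkc+1)/2$ can be integers only when both $s$ and $k$ are even.

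\emph{Sufficiency; overall strategy.} My plan is to invoke Theorem \ref{sq->rt}. Setting $d = \gcd(s,k)$ and $N = nk/d$, it suffices to construct a diagonal $\MS_{[1, Ndc]}(N; d; c)$: a set of $c$ square $N \times N$ arrays, each supported on $d$ consecutive diagonals, whose entries partition $[1, Ndc]$ and whose rows and columns all have constant sum. Whenever $d \geq 3$, I would build each such array as a juxtaposition of basic $d$-diagonal blocks $U_x$ and $V_x$ generalizing those in the proof of Proposition \ref{rec2}. Each block fills its $d$ consecutive diagonals with entries drawn in low/high pairs from $[x+1, x+N]$ and $[Ndc - x - N + 1, Ndc - x]$, arranged so that every column sums automatically to the constant $d(Ndc+1)/2$, while a reflected/shifted mate block balances the row sums. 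Concatenating $c$ such block groups with pairwise-disjoint entry shifts $x$ then yields the required set.

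\emph{Main obstacle.} The diagonal-square reduction breaks down when $d \in \{1, 2\}$, since a square $N \times N$ partially filled array with only $d \leq 2$ filled cells per row cannot realize a constant row sum from distinct entries. For $d = 1$ (which forces $nkc$ odd, hence both $s, k$ odd), I would instead use a block-diagonal strategy: write $m = kt$ and $n = st$, subdivide the $m \times n$ array into a $t \times t$ grid of $k \times s$ cells, and place on the $t$ diagonal cells the $tc$ rectangles of an $\MRS(k, s; s, k; tc)$ supplied by Froncek's Theorem \ref{Fron} (applicable since $kstc = nkc$ is odd); each row and column of the assembled array then inherits the common sum of a single small magic rectangle. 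The case $d = 2$ (with $s, k$ both even and $\gcd(s, k) = 2$, e.g.\ $s = 4, k = 6$) is the most delicate and will be the main technical hurdle: here neither the diagonal-square reduction nor the Froncek-based block construction directly applies, because the small tight $\MRS(k/2, s/2; \cdot)$ need not exist (Froncek's conditions fail whenever $sk/4$ is even). For this subcase I expect to need a hands-on hybrid construction, adapting the $2$-diagonal block pairs $U_x, V_x$ of Proposition \ref{rec2} to cover $k$ consecutive diagonals of a rectangle, with careful low-end/high-end pairing to guarantee constant row and column sums and with disjoint $c$-fold entry shifts to exhaust $[1, nkc]$.
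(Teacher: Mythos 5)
Your necessity argument and your treatment of the case $d=\gcd(s,k)=1$ are sound and coincide with the paper's proof (block-diagonal placement of the $tc$ tight rectangles of Froncek's $\MRS(k,s;tc)$, which exists since $kstc=nkc$ is odd). The rest of the sufficiency direction, however, is not a proof but a plan, and it has two genuine gaps. First, for $nkc$ odd with $d\geq 3$ you only assert that $d$-diagonal analogues of the blocks $U_x,V_x$ of Proposition \ref{rec2} can be arranged with constant column sums and row-balancing mates; this is exactly the nontrivial combinatorial content, and for odd $d$ the ``low/high pairing'' you invoke cannot even pair off all $d$ entries of a column, so the leftover entries must be distributed coherently across both rows and columns. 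The paper does not construct these squares at all: it quotes \cite[Corollary 7]{KL} for the existence of a diagonal $\MRS\left(\frac{nk}{d};d;c\right)$ when $nkc$ is odd and $d\geq 3$, and then applies Theorem \ref{sq->rt}. If you want a self-contained argument here, you must either supply that construction or cite Khodkar--Leach as the paper does.

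Second, and more seriously, you explicitly leave open the case $s,k$ both even with $d=2$, which you correctly identify as inaccessible both to the diagonal-square reduction and to the Froncek-based block construction; and your $d\geq 4$ even sketch is likewise not carried out. The paper closes the entire ``$s,k$ both even'' case (all even $d$, including $d=2$) by a single mechanism that your proposal does not contemplate: take a \emph{shiftable} $\SMA(m,n;s,k)$ --- a signed magic array in which every row and every column contains equally many positive and negative entries, whose existence is \cite[Proposition 5.7]{MP3} --- and for each $\ell\in[1,c]$ replace every positive entry $x$ by $x+\frac{nk}{2}(2c-\ell)$ and every negative entry $y$ by $y+\frac{nk}{2}\ell+1$. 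Each resulting array has row sums $\frac{s}{2}(nkc+1)$ and column sums $\frac{k}{2}(nkc+1)$, and the $c$ entry sets partition $[1,nkc]$. This translation trick needs no assumption on $\gcd(s,k)$ beyond $s,k$ even, so it dissolves your $d=2$ obstacle entirely; without it (or some substitute construction) your proof is incomplete precisely where you predicted it would be.
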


\begin{proof}
Set $d=\gcd(s,k)$ and suppose that the product $nkc$ is odd.
Assume $d\neq 1$, hence $d\geq 3$. By \cite[Corollary 7]{KL}, there exists a diagonal $\MRS\left(\frac{nk}{d}; d ; c\right)$:
hence, the existence of an $\MRS(m,n;s,k;c)$  follows from  Theorem \ref{sq->rt}.
Assume $d=1$. Since $s$ and $k$ are coprime, we can write $m=kr$ and $n=sr$, where $r\geq 1$ is an odd integer.
Let $A_0,\ldots,A_{rc-1}$ be the arrays of a magic rectangle set $\MRS(k,s;rc)$, whose existence is guaranteed by Theorem \ref{Fron}.
Write $A_\ell=\left(a_{i,j}^{(\ell)}\right)$, with $i \in [1,k]$, $j \in [1,s]$ and $\ell\in [0,rc-1]$.
We construct $c$ partially filled arrays of size  $m\times n$  by taking
empty arrays $R_t$, $t\in [0,c-1]$, of size $(kr)\times (sr)$ and filling them in such a way that 
the entry of the cell $(k u+i,su+j)$, $u \in [0,r-1]$, of $R_t$ is  $a_{i,j}^{(rt+u)}$.
Hence, the arrays $R_0,\ldots,R_{c-1}$ so constructed are the members of an $\MRS(m,n;s,k;c)$.
In fact, its entries are the integers of $[1, k s rc]=[1,nkc]$, 
each row of $R_t$ contains $s$ filled cells and each column contains $k$ filled cells; 
the elements of every row sum to $s\frac{ksrc+1 }{2}=s\frac{nkc+1}{2}$, while the elements of every column sum to 
$k\frac{nkc+1}{2}$.

Now, suppose that $s$ and $k$ are both even. By \cite[Proposition 5.7]{MP3} there exists a shiftable $\SMA(m,n;s,k)$, say $A$,
that is, a signed magic array where every row and every column contains an equal number
of positive and negative entries.
For every $\ell\in [1,c]$, let $R_\ell$ be the array obtained from $A$ by replacing every positive entry $x$ of $A$  with
$x+\frac{nk}{2}(2c-\ell)$ and replacing every negative entry $y$ with $y+ \frac{nk}{2}\ell+1$.
So, the elements of each row of $R_\ell$ sum to $\frac{s}{2}( \frac{nk}{2}(2c-\ell)+   \frac{nk}{2}\ell+1 )=\frac{s}{2} (nkc+1) $
and the elements of each column sum to $\frac{k}{2} (nkc+1)$.
Furthermore, $\E(R_\ell)=\left[ \frac{nk}{2}(\ell-1)+1, \frac{nk}{2}\ell\right]\cup \left[\frac{nk}{2}(2c-\ell)+1,
\frac{nk}{2}(2c+1-\ell)\right]$.
It follows that
$$\bigcup_{\ell=1}^c \E(R_\ell)=\left[1,\frac{nk}{2}c\right]\cup \left[\frac{nk}{2}c+1,\frac{nk}{2}2c\right] =[1,nkc],$$
and so $R_1,\ldots,R_c$ are the members of an $\MRS(m,n;s,k;c)$.

This proves  the existence of an $\MRS(m,n;s,k;c)$ whenever $nkc$ is odd or $n$ and $k$ are both even. 
Vice-versa, if an $\MRS(m,n;s,k;c)$ exists, either $nkc$ is odd, or $s$ and $k$ are both even, by Remark \ref{nec_rec}.
\end{proof}

\begin{proof}[Proof of Theorem \ref{main2}]
If $s,k\geq 3$, the result follows from Proposition \ref{magic_rect}.
Also, since the transpose of an $\M_\Omega(m,n;s,k)$ is an $\M_\Omega(n,m;k,s)$, we may assume $k=2$, and hence the result
follows from Proposition \ref{rec2}.
\end{proof}

\begin{cor}\label{SMAodd}
Let  $m,n,s,k$ be four integers such that $3\leq s\leq n$, $3\leq k \leq m$ and $ms=nk$.
If $nk$ is odd, then there exists an $\SMA(m,n;s,k)$.
\end{cor}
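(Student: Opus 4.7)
The plan is to derive the signed magic array from a magic rectangle with empty cells by a straightforward shift of the entries.

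First, I would invoke Theorem \ref{main2} with $c=1$. Since $nk$ is odd, the quantity $nkc = nk$ is odd, so the theorem produces an $\MRS(m,n;s,k;1)$, i.e.\ a single partially filled $m \times n$ array $M$ whose entries form the set $\{1,2,\ldots,nk\}$, with each row containing $s$ filled cells summing to $s(nk+1)/2$ and each column containing $k$ filled cells summing to $k(nk+1)/2$ (as recorded in Remark \ref{nec_rec}).

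Next, because $nk$ is odd, $h:=(nk+1)/2$ is a positive integer. Let $M'$ be the partially filled array obtained from $M$ by replacing every entry $x$ with $x-h$, keeping the set of filled cells the same. Then
$$\E(M') = \{1-h,\,2-h,\ldots,nk-h\} = \left[-\tfrac{nk-1}{2},\,\tfrac{nk-1}{2}\right],$$
which is exactly the index set defining an $\SMA(m,n;s,k)$ when $nk$ is odd. The row sums of $M'$ equal $s(nk+1)/2 - sh = 0$ and, likewise, the column sums equal $k(nk+1)/2 - kh = 0$. Therefore $M'$ satisfies all the conditions of an $\SMA(m,n;s,k)$.

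There is no real obstacle here: the statement is a direct corollary of Theorem \ref{main2}, and the only thing to check is that the translation by $h$ sends $\{1,\ldots,nk\}$ bijectively onto the required symmetric interval and zeroes out the row and column sums, both of which follow from $nk$ being odd. The only sanity check worth noting is that the hypothesis $s,k \geq 3$ of the corollary falls inside the range $2\leq s\leq n$, $2\leq k\leq m$ required by Theorem \ref{main2}, so the theorem applies directly.
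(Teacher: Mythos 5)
Your proposal is correct and matches the paper's own proof essentially verbatim: both invoke Theorem \ref{main2} with $c=1$ to obtain an $\MR(m,n;s,k)$ and then translate every entry by $(nk+1)/2$, checking that the entry set becomes the symmetric interval and that the row and column sums vanish. No gaps.
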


\begin{proof}
Let $A$ be an $\MR(m,n;s,k)$, whose existence follows from Theorem \ref{main2} for $c=1$.
Set $w=\frac{nk+1}{2}$, so that the elements of each row of $A$ sum to $sw$, while the elements  of each column sum to $kw$.
Replacing each entry $x\in [1,nk]$ of $A$ with $x-w$, we obtain an $\SMA(m,n;s,k)$, say $B$.
In fact, $\E(B)=[1-w, nk-w]=\left[-\frac{nk-1}{2}, +\frac{nk-1}{2}\right]$. Moreover, the elements of each row of $B$ sum to $sw - s w=0$;
similarly, the elements of each column sum to $0$.
\end{proof}

\section{Some constructions for the case $\Omega=\Gamma$}\label{Gamma}

First, we consider the case when $\Gamma$ is a cyclic group. Thanks to the results of Section \ref{rec} we obtain the following.

\begin{cor}\label{cyc}
Let  $m,n,s,k,c$ be five positive integers such that $2\leq s\leq n$, $2\leq k \leq m$ and $ms=nk$.
A $\MoS_{\Z_{nkc}}(m,n; s,k; c)$ exists if and only if $nkc$ is odd.
\end{cor}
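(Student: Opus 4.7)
The plan is to deduce this from Theorem \ref{main2}. The necessity direction comes from a general parity/involution argument; the sufficiency direction is a straightforward reinterpretation of a magic rectangle set with empty cells as living in $\Z_{nkc}$ after a uniform translation.

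For necessity, suppose a $\MoS_{\Z_{nkc}}(m,n;s,k;c)$ exists. Summing all row sums across all $c$ arrays gives $0$, since each of the $mc$ rows has sum $0$. On the other hand, by condition (a) in Definition \ref{magic_pf_set}, the multiset of all entries across all $c$ arrays is exactly $\Z_{nkc}$, so this total equals $\sum_{g\in\Z_{nkc}} g$. By the computation recorded just after Definition \ref{def:H}, this sum equals the unique involution of $\Z_{nkc}$ when $nkc$ is even (namely $[nkc/2]_{nkc}\neq 0$), and equals $0$ otherwise. Hence $nkc$ must be odd, which is precisely the remark in the excerpt that $\Z_{nkc}\in\G$ forces odd order in the cyclic case.

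For sufficiency, assume $nkc$ is odd. Since $nkc$ odd forces $s$ and $k$ to both be odd, the hypothesis of Theorem \ref{main2} is satisfied, and there exists an $\MRS(m,n;s,k;c)$, say $\{R_1,\ldots,R_c\}$, whose entries are the integers $1,2,\ldots,nkc$, with every row summing to $s(nkc+1)/2$ and every column summing to $k(nkc+1)/2$. Set $N=nkc$ and let $t=(N+1)/2$, which is an integer since $N$ is odd. Define $R_\ell'$ to be the array obtained from $R_\ell$ by replacing each entry $x$ with $[x-t]_N\in\Z_N$. The map $x\mapsto [x-t]_N$ is a bijection from $\{1,\ldots,N\}$ onto $\Z_N$, so condition (a) of Definition \ref{magic_pf_set} holds for $\{R_1',\ldots,R_c'\}$ with $\Omega=\Gamma=\Z_N$; condition (b) is inherited from the $R_\ell$; and for condition (c), the sum of the $s$ entries in any row of $R_\ell'$ is $[s(N+1)/2-st]_N=[0]_N$, and likewise every column sum is $[0]_N$. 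Thus $\{R_1',\ldots,R_c'\}$ is a $\MoS_{\Z_N}(m,n;s,k;c)$, as required.

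There is no real obstacle here: the whole content of the corollary is absorbed into Theorem \ref{main2}, and the only verification is that the uniform shift by $(nkc+1)/2$ both realises the set bijection $\{1,\ldots,nkc\}\leftrightarrow\Z_{nkc}$ and kills the common row/column constants. The one place to be mildly careful is observing that $nkc$ odd implies $s,k$ odd (in particular $\geq 3$), so that Theorem \ref{main2} truly applies.
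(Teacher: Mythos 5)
Your proof is correct and follows essentially the same route as the paper: necessity via the observation that the sum of all elements of $\Z_{nkc}$ is the nonzero involution when $nkc$ is even (forcing odd order in the cyclic case), and sufficiency by taking the $\MRS(m,n;s,k;c)$ from Theorem \ref{main2} and shifting every entry by $w=(nkc+1)/2$ before reducing modulo $nkc$. The only cosmetic differences are that the relevant involution computation appears in Section 2 rather than after Definition \ref{def:H}, and your closing worry about needing $s,k\geq 3$ is unnecessary, since Theorem \ref{main2} already applies for all $s,k\geq 2$ and the condition ``$nkc$ odd'' alone guarantees existence.
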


\begin{proof}
If a  $\MoS_{\Z_{nkc}}(m,n;s,k;c)$ exists, then $nkc$ must be odd.
In this case, by Theorem \ref{main2} there exists an $\MRS(m,n; s,k; c)$,
whose members are, say, $A_1,\ldots,A_c$.
Write $w=\frac{nkc+1}{2}$ and replace each entry $x\in [1, nkc]$ of $A_\ell$ with 
$[x-w]_{nkc} \in \Z_{nkc}$, obtaining a partially filled array $B_\ell$.
Then, $\{B_1,\ldots,B_c\}$ is a $\MoS_{\Z_{nkc}}(m,n;s,k;c)$.
In fact, the elements of each row of $B_\ell$ sum to $[s w ]_{nkc}-s [w]_{nkc} =[0]_{nkc}$;
similarly, the elements of each column sum to $[0]_{nkc}$.
Furthermore, 
$$\begin{array}{rcl}
\bigcup\limits_{\ell=1}^c \E(B_\ell) & =&  \left\{ [1-w]_{nkc}, [2-w]_{nkc},\ldots, [nkc-w]_{nkc} \right\}\\
& =& \left\{ 
\left[-\frac{nk-1}{2}\right]_{nkc}, 
\left[-\frac{nk-1}{2}+1\right]_{nkc}, \ldots,
\left[ \frac{nk-1}{2}\right]_{nkc} \right\}= \Z_{nkc}.
\end{array}$$
\end{proof}

\begin{ex}
We start by taking the arrays  $A_0,A_1,A_2$  of an $\MRS(3,5;3)$:
$$A_0=\begin{array}{|c|c|c|c|c|}\hline
 1 & 35 &  9 & 31 & 39   \\\hline
24 & 28 & 20 & 27 & 16   \\\hline
44 &  6 & 40 & 11 & 14  \\\hline
\end{array},\quad
A_1=\begin{array}{|c|c|c|c|c|}\hline
 2 & 36 &  7 & 32 & 38 \\\hline
45 &  4 & 41 & 12 & 13 \\\hline
22 & 29 & 21 & 25 & 18  \\\hline
\end{array},$$
$$A_2=\begin{array}{|c|c|c|c|c|}\hline
 3 & 34 &  8 & 33 & 37 \\\hline
43 &  5 & 42 & 10 & 15 \\\hline
23 & 30 & 19 & 26 & 17 \\\hline
\end{array}.$$
Following the proofs of Proposition \ref{magic_rect} and  Corollary \ref{cyc},
we get the $\Mo_{\Z_{45}}(9,15; 5,3)$ of Figure \ref{fig1}.
\end{ex}

\begin{figure}[ht]
$$\begin{array}{|c|c|c|c|c|c|c|c|c|c|c|c|c|c|c|}\hline
23 & 12 & 31 &  8 & 16  & & & & & & & & & & \\\hline
 1 &  5 & 42 &  4 & 38  & & & & & & & & & &\\\hline
21 & 28 & 17 & 33 & 36  & & & & & & & & & &\\\hline
& & & & & 24 & 13 & 29 &  9 & 15 & & & & &  \\\hline
& & & & & 22 & 26 & 18 & 34 & 35 & & & & &  \\\hline
& & & & & 44 &  6 & 43 &  2 & 40 & & & & & \\\hline
& & & & & & & & & & 25 & 11 & 30 & 10 & 14   \\\hline
& & & & & & & & & & 20 & 27 & 19 & 32 & 37   \\\hline
& & & & & & & & & &  0 &  7 & 41 &  3 & 39  \\\hline
\end{array}.$$
\caption{A $\Mo_{\Z_{45}}(9,15; 5,3)$, where each entry $x$ must be read as $[x]_{45}$.}\label{fig1}
\end{figure}

Now, we completely solve the case when $k\equiv 0 \pmod 4$ (there is no need to assume that $\Gamma$ is noncyclic).

\begin{prop}\label{d0}
Suppose $n\geq k\geq 4$ with $k\equiv 0 \pmod 4$. Then, there exists a diagonal $\MoS_{\Gamma}(n;k;c)$ for every $\Gamma \in \G$ of order $nkc$.
\end{prop}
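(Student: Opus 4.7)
Plan. I would construct the family $\MoS_\Gamma(n;k;c)$ directly via a cyclic diagonal pattern, using the factor of $4$ in $k$ to organize the $k = 4k'$ diagonals into $k'$ blocks of four. In each of the $c$ arrays $A^{(\ell)}$, $\ell \in [1,c]$, I would place entries only on the diagonals $D_0, D_1, \ldots, D_{k-1}$, and in block $t$ (consisting of diagonals $4t, 4t+1, 4t+2, 4t+3$) the four cells in row $i$ would receive
\[
g_t^{(\ell)} + i\alpha, \ -g_t^{(\ell)} - i\alpha + \beta, \ h_t^{(\ell)} - i\alpha, \ -h_t^{(\ell)} + i\alpha - \beta,
\]
with $\alpha \in \Gamma$ of order $n$ (so that $A := \langle \alpha \rangle$ has order $n$) and $\beta, g_t^{(\ell)}, h_t^{(\ell)} \in \Gamma$ still to be selected. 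The row sum is visibly zero, and for column $j$ the block contribution, read off in rows $j-4t, j-4t-1, j-4t-2, j-4t-3$, also telescopes to zero because the coefficients of $i\alpha$ satisfy $(j-4t)-(j-4t-1)-(j-4t-2)+(j-4t-3)=0$.

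The combinatorial task is then to exhaust $\Gamma$ exactly once. Each block of each array fills the four cosets $g_t^{(\ell)}+A$, $-g_t^{(\ell)}+\beta+A$, $h_t^{(\ell)}+A$, $-h_t^{(\ell)}-\beta+A$ of $A$, so the $k'c$ blocks must partition the $kc$ cosets of $A$ in $Q := \Gamma/A$ into four-tuples of the form $\{X, \beta-X, Y, -\beta-Y\}$. Since $|Q|=kc$ is divisible by $4$, we have $Q \neq 2Q$ and we may pick $\beta$ whose image in $Q$ is not a double; then both involutions $\sigma_1(X) = \beta - X$ and $\sigma_2(Y) = -\beta - Y$ on $Q$ are fixed-point-free. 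The required partition into four-tuples amounts to choosing a subset $S \subset Q$ of size $|Q|/2$ that is simultaneously $\sigma_1$- and $\sigma_2$-invariant (equivalently, invariant under the group $\langle \sigma_1, \sigma_2 \rangle$, which is generated by $\sigma_1$ together with translation by $2\beta$): such an $S$ is a union of half the orbits of this group, and a parity argument on the number of orbits will show it exists.

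The main obstacle is the standing assumption that $\Gamma$ contains a cyclic subgroup of order exactly $n$. This fails when $n$ exceeds the exponent of $\Gamma$---for instance when $\Gamma = (\Z_2)^{2r}$ and $n$ is large---in which case one must replace $i \mapsto i\alpha$ by a carefully chosen injection $[1,n] \to A$ with $A$ a (possibly noncyclic) subgroup of $\Gamma$ of order $n$. Verifying that the modified row and column sum computations still vanish, and that the coset-partition argument goes through when $A$ is not cyclic, is the main technical hurdle; this is also precisely where the hypothesis $\Gamma \in \G$ enters crucially, since the presence of more than one involution in $\Gamma$ (which, combined with $k \equiv 0 \pmod 4$, forces the $2$-Sylow of $\Gamma$ to be noncyclic) supplies the structural flexibility needed to carry out the construction uniformly across all admissible groups.
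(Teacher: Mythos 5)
Your construction is genuinely different from the paper's, but it has a gap that you yourself flag and that, as stated, cannot be repaired along the lines you propose. The whole mechanism rests on the row-index map $i\mapsto i\alpha$ being a homomorphism onto a subgroup $A$ of order $n$: the column-sum cancellation uses exactly the identity $\phi(i)-\phi(i-1)-\phi(i-2)+\phi(i-3)=0$ (indices modulo $n$) for the map $\phi$ sending a row index to the ``$A$-part'' of the entry. This identity says that the successive differences $\phi(i)-\phi(i-1)$ depend only on $i\bmod 2$; when $n$ is odd it therefore forces all differences to be equal, i.e.\ $\phi(i)=\phi(0)+i\delta$, so the image of $\phi$ is a coset of $\langle\delta\rangle$ and $A$ is necessarily cyclic of order $n$. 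Hence no ``carefully chosen injection'' into a noncyclic $A$ can exist. A concrete instance where this kills the construction: $\Gamma=\Z_6\oplus\Z_6\in\G$ with $n=9$, $k=4$, $c=1$; the only subgroup of order $9$ is $\Z_3\oplus\Z_3$, which is noncyclic, and $\Gamma$ has no element of order $9$. A secondary, also unrepaired, gap is the ``parity argument'' for choosing a $\langle\sigma_1,\sigma_2\rangle$-invariant set $S$ with $|S|=|Q|/2$: the orbits of this group are $(X+T)\cup(\beta-X+T)$ with $T=\langle 2\beta\rangle$, of size $|T|$ or $2|T|$, and a union of orbits of total size exactly $|Q|/2$ need not exist (for example, if $Q\cong\Z_4$ and $\beta$ maps to a generator there is a single orbit of size $4$ and no invariant set of size $2$), so the choices of $A$ and $\beta$ would need further constraints that you do not supply.

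For comparison, the paper's proof avoids all global cyclic structure. Since $4$ divides $|\Gamma|$ and $\Gamma\in\G$, one can write $\Gamma=\Z_{2a}\oplus\Z_{2b}\oplus\Gamma'$ with $a,b\geq 1$, and for each $(x,y,g)\in[1,a]\times[1,b]\times\Gamma'$ build a $3\times 2$ block (middle row empty) whose two filled rows sum to $([-1]_{2a},[0]_{2b},0)$ and $([1]_{2a},[0]_{2b},0)$ and whose columns sum to $([0]_{2a},[-1]_{2b},0)$ and $([0]_{2a},[1]_{2b},0)$. Placing $n$ such blocks with their top-left cells along a diagonal makes every row (resp.\ column) meet exactly two adjacent blocks whose contributions cancel, and the four entry types cover $\Gamma$ by the parities of the first two coordinates. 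The cancellation is purely local between neighbouring blocks, which is precisely why no element or subgroup of order $n$ with special arithmetic structure is ever needed; that is the idea your approach is missing.
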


\begin{proof}
Since $k\equiv 0\pmod 4$ and $\Gamma \in \G$, we can write $\Gamma = \Z_{2a}\oplus \Z_{2b}\oplus \Gamma'$, where $a,b\geq 1$ and 
$|\Gamma'|\geq 1$.
Let $\Lambda=[1,a]\times [1,b] \times \Gamma'\subset \Z\oplus \Z \oplus \Gamma'$.
For any $(x,y,g) \in \Lambda$, consider the following $3\times 2$ partially filled array with elements in $\Gamma$:
$$B(x,y,g)=
\begin{array}{|c|c|}\hline
([2x]_{2a}, [2y]_{2b},g) & -([2x+1]_{2a}, [2y]_{2b},g) \\\hline
& \\\hline
-([2x]_{2a}, [2y+1]_{2b}, g) & ([2x+1]_{2a}, [2y+1]_{2b}, g)\\ \hline
\end{array}.$$

Note that the elements in the nonempty rows sum to 
$([-1]_{2a}, [0]_{2b}, 0_{\Gamma'})$ and $([1]_{2a}, [0]_{2b}, 0_{\Gamma'})$,
while the elements of the columns sum to  $([0]_{2a}, [-1]_{2b}, 0_{\Gamma'})$ and $([0]_{2a}, [1]_{2b}, 0_{\Gamma'})$.
We  use this $3\times 2$ block for constructing  partially filled  arrays whose rows and columns sum to $0_\Gamma$.
Define $\B=\{ B(x,y,g)\mid (x,y,g)\in \Lambda\}$: this is a set of cardinality $ab|\Gamma'|=\frac{|\Gamma|}{4}=\frac{nkc}{4}$.
So, write  $\B=\left(X_1,X_2,\ldots,X_{\frac{k}{4}nc}\right)$.
Taking an empty $n\times n$ array $A_1$, arrange the first $n$ blocks of $\B$ in such a way 
that the element of the cell $(1,1)$ of $X_j$
fills the cell $(j,j)$ of $A_1$ (we work modulo $n$ on row/column indices).
In this way, we fill the diagonals $D_{n-2}, D_{n-1}, D_{0}, D_{1}$.
In particular, every row has $4$ filled cells and every column has $4$ filled cells.
Looking at the rows, the elements belonging to the
diagonals $D_{0},D_{1}$ sum to $([-1]_{2a}, [0]_{2b}, 0_{\Gamma'})$, 
while the elements belonging to the diagonals $D_{n-2},D_{n-1}$ sum to $([1]_{2a}, [0]_{2b}, 0_{\Gamma'})$.
Looking at the columns, the elements belonging to the diagonals $D_{0},D_{n-2}$
sum to $([0]_{2a}, [-1]_{2b}, 0_{\Gamma'})$, while the elements belonging to the diagonals $D_{1},D_{n-1}$
sum to $([0]_{2a}, [1]_{2b}, 0_{\Gamma'})$. Then $A_1$ has row/column sums equal to $0_\Gamma$.

Applying this process $\frac{k}{4}$ times (working with $X_{n+1},X_{n+2}, \ldots,
X_{2n}$ on the diagonals $D_{2},D_{3},$ $D_{4},D_{5}$, and so on),
we obtain a partially filled array $A_1$, whose rows and columns have exactly $k$ filled cells.
Finally, we repeat this entire process $c-1$ times, obtaining a set $A_1,\ldots,A_c$ of partially filled arrays.
To prove that this set is a diagonal $\MoS_{\Gamma}(n;k;c)$ it suffices to check that $\bigcup\limits_{\ell=1}^c\E(A_\ell)=\E(\B)$
is equal to $\Gamma$.

Considering the four entries of each $X_i$, we can write $\E(\B)=S_1\cup S_2\cup S_3\cup S_4$, where
$$\begin{array}{rcl}
S_1 & = & \{ ([2x]_{2a}, [2y]_{2b}, g)\mid (x,y,g)\in \Lambda \},\\    
S_2 & = & \{- ([2x+1]_{2a}, [2y]_{2b}, g) \mid (x,y,g)\in \Lambda \},\\    
S_3 & = & \{- ([2x]_{2a}, [2y+1]_{2b}, g) \mid (x,y,g)\in \Lambda  \},\\    
S_4 & = & \{  ([2x+1]_{2a}, [2y+1]_{2b}, g)\mid (x,y,g)\in \Lambda \}.
  \end{array}$$
Clearly, the sets $S_1,S_2,S_3,S_4$ are pairwise disjoint. 
Fixed $x,y\in \Z$, we have $-[2x+1]_{2a}=[2x'+1]_{2a}$ and $-[2y]_{2b}=[2y']_{2b}$ for some $x' \in [1,a]$ and some $y'\in [1,b]$.
It follows that
$$\begin{array}{rcl}
S_2& = & \{([2x+1]_{2a}, [2y]_{2b}, g) \mid (x,y,g)\in \Lambda \},\\
S_3 & = & \{([2x]_{2a}, [2y+1]_{2b}, g) \mid (x,y,g)\in \Lambda \}.
\end{array}$$
Then $\E(\B)=\Z_{2a}\oplus \Z_{2b}\oplus \Gamma'=\Gamma$.
 \end{proof}
 
Following the proof of the previous proposition, we construct a $\MoS_{\Z_6\oplus \Z_2\oplus \Z_4}(6;4;2)$,
where each entry $xyz$ must be read as $([x]_6,[y]_2,[z]_4)$:
 $$\begin{array}{|c|c|c|c|c|c|}\hline
000 & 500 &     &     & 410 & 310 \\\hline
311 & 001 & 503 &     &     & 413 \\\hline
010 & 110 & 002 & 502 &     &      \\\hline
    & 013 & 111 & 003 & 501 &      \\\hline
    &     & 012 & 112 & 200 & 300  \\\hline
303 &     &     & 011 & 113 & 201  \\\hline
   \end{array},\quad
\begin{array}{|c|c|c|c|c|c|}\hline
202 & 302 &     &     & 212 & 512 \\\hline
513 & 203 & 301 &     &     & 211 \\\hline
412 & 312 & 400 & 100 &     &      \\\hline
    & 411 & 313 & 401 & 103 &     \\\hline
    &     & 210 & 510 & 402 & 102 \\\hline
101 &     &     & 213 & 511 & 403 \\\hline
   \end{array}.$$

Now, we provide a construction when $k$ is odd.

\begin{prop}\label{diag_k}
Suppose that $n\geq k\geq 3$ are odd integers such that  $\gcd( n, k-1 )=1$. Then there exists a diagonal $\Mo_{\Z_k\oplus \Z_n}(n;k)$.
\end{prop}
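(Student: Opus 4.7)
The plan is to fill the $k$ consecutive diagonals $D_0, D_1, \ldots, D_{k-1}$ of an $n \times n$ array with entries of a special form that decouples the two coordinates of $\Z_k \oplus \Z_n$. Concretely, on the cell $(i, i+r)$ of diagonal $D_r$ (with the column index read modulo $n$), I will place the entry
\[ ([r]_k,\; [\beta_r i + \alpha_r]_n) \in \Z_k \oplus \Z_n, \]
where the parameters $\beta_r, \alpha_r \in \Z_n$ are to be chosen. By construction the array is $k$-diagonal with $k$ filled cells in every row and column. Since the first coordinate $[r]_k$ already distinguishes cells on different diagonals, the $nk$ entries will be pairwise distinct---and hence exhaust $\Z_k \oplus \Z_n$---as soon as $\gcd(\beta_r, n) = 1$ for every $r$.

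Next I would impose the zero-sum conditions on rows and columns. Because $k$ is odd, the first coordinate of every row and column sum is $\sum_{r=0}^{k-1} r = k(k-1)/2 \equiv 0 \pmod k$, which is automatic. For the second coordinates, row $i$ contributes $i\sum_r \beta_r + \sum_r \alpha_r$ while column $j$ contributes $j\sum_r \beta_r - \sum_r r\beta_r + \sum_r \alpha_r$, so the vanishing of all row and column sums reduces to the three linear conditions
\[ \sum_{r=0}^{k-1} \alpha_r \equiv 0,\quad \sum_{r=0}^{k-1} \beta_r \equiv 0,\quad \sum_{r=0}^{k-1} r \beta_r \equiv 0 \pmod n. \]
The $\alpha$-condition is trivially satisfied by taking $\alpha_r = 0$ throughout, so everything reduces to producing units $\beta_0, \ldots, \beta_{k-1} \in \Z_n^*$ whose unweighted and weighted sums both vanish modulo $n$.

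Producing such a sequence is the main obstacle, and it is exactly where the hypothesis $\gcd(n, k-1) = 1$ will be used. I would set $\beta_r = 1$ for every $r \neq (k-1)/2$ and $\beta_{(k-1)/2} \equiv 1-k \pmod n$. Then $\sum_r \beta_r = (k-1) + (1-k) = 0$, while a short rearrangement gives
\[ \sum_{r=0}^{k-1} r \beta_r \;=\; \frac{k(k-1)}{2} - \frac{k-1}{2} + \frac{k-1}{2}(1-k) \;=\; \frac{k-1}{2}\bigl[k - 1 + (1-k)\bigr] \;=\; 0. \]
The values $\beta_r = 1$ are plainly coprime to $n$, and $\beta_{(k-1)/2}$ is coprime to $n$ precisely because $\gcd(k-1, n) = 1$ by hypothesis. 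Combining this with the distinctness and sum analyses above yields the desired diagonal $\Mo_{\Z_k \oplus \Z_n}(n; k)$.
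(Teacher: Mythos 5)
Your construction is correct and is essentially the paper's own: both fill $k$ consecutive diagonals with a constant first coordinate per diagonal and a second coordinate affine in the row index, using slope $1$ on every diagonal except one, which receives slope $1-k$ (a unit precisely because $\gcd(n,k-1)=1$) and is placed at the centre of the band so that the weighted-sum condition for the columns vanishes. The paper merely indexes the band as $D_{n-b},\ldots,D_{n-1},D_0,\ldots,D_b$ with the special diagonal at $D_0$, so that the column condition cancels pairwise; this is only a relabeling of your computation of $\sum_r r\beta_r$.
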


 \begin{proof}
Write $n=2a+1$ and $k=2b+1$. Take an empty $n\times n$ array and fill the diagonals $D_{n-b},\ldots, D_{n-1},D_0,D_1,\ldots,D_b$ as follows.
Note that we give the elements for each diagonal, starting with those belonging to the first row.
For $\ell\in [1,b]$, 
$$\begin{array}{rl}
D_\ell :&  ([2\ell]_k, [1]_n), \;([2\ell]_k, [2]_n),\; \ldots,([2\ell]_k, [n]_n);\\ \\[-8pt]
D_{n-\ell} : &  ([2\ell-1]_k, [1]_n),\; ([2\ell-1]_k, [2]_n),\; \ldots,([2\ell-1]_k, [n]_n);\\ \\[-8pt]
D_0 : &  ([0]_k, [1-k ]_n),\; ([0]_k, [2(1-k) ]_n)\; \ldots,([0]_k, [n(1-k)]_n).
\end{array}$$
Call $A$ the partially filled array so obtained.
By construction, we fill $k$ cells in each row and each column of $A$. Also, we have 
$$\bigcup_{\ell=1}^b \left(\E(D_\ell)\cup \E(D_{n-\ell})\right) =\left\{(x,y)\mid x \in \Z_k^*, \; y \in \Z_n\right\}$$
and $\E(D_0)=\{([0]_k,y)\mid y \in \Z_n\}$, whence $\E(A)=\Z_k\oplus \Z_n$.
For every $i \in [1,n]$, the $i$-th row of $A$ contains the element $([0]_k, [i(1-k)]_n)$ and the elements
$([x]_k, [i]_n)$ with $x \in [1,k-1]$. The sum of these elements is
$$([0]_k, [i(1-k)]_n) + \sum_{x=1}^{k-1} ([x]_k, [i]_n) = ([0]_k, [i(1-k)+(k-1)i]_n)=([0]_k, [0]_n).$$
The $i$-th column of $A$ contains the element $([0]_k, [i(1-k)]_n)$ and the elements
$([2\ell-1]_k,[\ell+i]_n )$, $([2\ell]_k,[n-\ell+i]_n)$  with $\ell \in [1,b]$.
The sum of these elements is
$$\begin{array}{c}
([0]_k, [i(1-k)]_n)+ \sum\limits_{\ell=1}^b \left(([2\ell-1]_k,[\ell+i]_n ) + ([2\ell]_k,[i-\ell]_n)\right)=\\ \\[-8pt]
([0]_k, [i(1-k)]_n)+ ([0]_k, [2ib]_n)= ([0]_k, [i(1-k) +i(k-1)]_n)=([0]_k, [0]_n).
\end{array}$$
This proves that $A$ is a  $\Mo_{\Z_k\oplus \Z_n}(n;k)$.
\end{proof}

Here, a diagonal $\Mo_{\Z_5\oplus\Z_7}(7;5)$ obtained following the proof of the previous proposition:
$$\begin{array}{|c|c|c|c|c|c|c|} \hline
([0]_5,[3]_7) & ([2]_5,[1]_7) & ([4]_5,[1]_7) &               &               & ([3]_5,[1]_7) & ([1]_5,[1]_7) \\\hline
([1]_5,[2]_7) & ([0]_5,[6]_7) & ([2]_5,[2]_7) & ([4]_5,[2]_7) &               &               & ([3]_5,[2]_7) \\\hline
([3]_5,[3]_7) & ([1]_5,[3]_7) & ([0]_5,[2]_7) & ([2]_5,[3]_7) & ([4]_5,[3]_7) &               &               \\\hline
              & ([3]_5,[4]_7) & ([1]_5,[4]_7) & ([0]_5,[5]_7) & ([2]_5,[4]_7) & ([4]_5,[4]_7) &               \\\hline
              &               & ([3]_5,[5]_7) & ([1]_5,[5]_7) & ([0]_5,[1]_7) & ([2]_5,[5]_7) & ([4]_5,[5]_7) \\\hline
([4]_5,[6]_7) &               &               & ([3]_5,[6]_7) & ([1]_5,[6]_7) & ([0]_5,[4]_7) & ([2]_5,[6]_7) \\\hline
([2]_5,[0]_7) & ([4]_5,[0]_7) &               &               & ([3]_5,[0]_7) & ([1]_5,[0]_7) & ([0]_5,[0]_7) \\\hline
\end{array}.$$

\section{Some constructions for the case $\Omega=\Gamma^*$}

Also in this case, we firstly consider the cyclic case. 
Thanks to Theorem \ref{main2} we obtain the following result.

\begin{lem}\label{rect->MPFS}
Let  $m,n,s,k,c$ be five positive integers such that $2\leq s\leq n$, $2\leq k \leq m$ and $ms=nk$.
Suppose that $s$ and $k$ are both even and such that $(s,k)\neq (2,2)$.
Then, there exists a $\MoS_{\Z_{nkc+1}^*}(m,n;s,k; c)$. 
\end{lem}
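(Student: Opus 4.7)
The plan is to deduce this directly from Theorem \ref{main2}, much in the spirit of the proof of Corollary \ref{cyc}, but modulo $N = nkc+1$ rather than modulo $nkc$.

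First I would observe that the hypotheses precisely match the existence condition of Theorem \ref{main2}: since $s$ and $k$ are both even with $(s,k)\neq (2,2)$, we have $sk\geq 8>4$. Hence there exists an $\MRS(m,n;s,k;c)$, whose members I will call $A_1,\ldots,A_c$. By Remark \ref{nec_rec}, the row sums of each $A_\ell$ equal $\frac{s(nkc+1)}{2}$ and the column sums equal $\frac{k(nkc+1)}{2}$; both are well-defined integers because $s$ and $k$ are even, and in fact both are integer multiples of $N=nkc+1$.

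Next, I would define $B_\ell$ from $A_\ell$ by replacing every entry $x\in [1,nkc]$ with $[x]_N \in \Z_N$. Since $x\in [1,N-1]$, all these images land in $\Z_{N}^\ast$. The sum of the entries in each row of $B_\ell$ equals $\left[\frac{s(nkc+1)}{2}\right]_N=[0]_N$, and analogously for columns, so each $B_\ell$ is a $\Mo_{\Z_N^\ast}(m,n;s,k)$-style array.

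Finally, I would verify the global bijectivity condition: the union $\bigcup_{\ell=1}^c \E(B_\ell)$ is the image of $[1,nkc]$ under the reduction $x\mapsto [x]_N$, which is precisely $\Z_{nkc+1}^\ast$. Therefore $\{B_1,\ldots,B_c\}$ is the desired $\MoS_{\Z_{nkc+1}^\ast}(m,n;s,k;c)$. The argument is essentially routine once Theorem \ref{main2} is in hand; there is no real obstacle, since the parity of $s$ and $k$ is exactly what makes the row/column sums vanish modulo $N$, and the size of $[1,nkc]$ matches $|\Z_{nkc+1}^\ast|$.
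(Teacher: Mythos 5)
Your proposal is correct and follows essentially the same route as the paper: invoke Theorem \ref{main2} (noting $sk>4$ from the hypotheses), reduce all entries modulo $nkc+1$, and use the evenness of $s$ and $k$ to see that the row and column sums $\frac{s}{2}(nkc+1)$ and $\frac{k}{2}(nkc+1)$ vanish in $\Z_{nkc+1}$ while $[1,nkc]$ maps bijectively onto $\Z_{nkc+1}^\ast$. No gaps.
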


\begin{proof}
By Theorem \ref{main2}, there exists an  $\MRS(m,n;s,k; c)$, say $\B=\{A_1,A_2,\ldots,A_c\}$.
Call $\widetilde \B=\{B_1,B_2,\ldots,B_c\}$ the set obtained by replacing each entry $x\in \Z$ of $A_i$ 
with $[x]\in \Z_{nkc +1}$. Then, $\E(\widetilde \B)=\{[x]_{nkc+1}\mid x \in [1,nkc]\}=\Z_{nkc+1}^*$.
Since $s$ is even and the elements of each row of $A_i$ sum to $\frac{s}{2} (nkc+1)$, the elements
of each row of $B_i$ sum to $[0]_{nkc+1}$. Similarly for the columns.
We conclude that $\widetilde \B$ is a $\MoS_{\Z_{nkc+1}^*}(m,n;s,k; c)$. 
\end{proof}

We also make use of the known results about signed magic arrays.
We recall that in \cite{KSW} it was proved that an $\SMA(2,n;n,2)$ exists if and only if $n\equiv 0,3 \pmod 4$, while an $\SMA(m,n;n,m)$ exists for all $m,n>2$.

\begin{thm}\cite{MP3}\label{SMA}
Let  $m,n,s,k$ be four integers such that $3\leq s\leq n$, $3\leq k \leq m$ and $ms=nk$.
There  exists an $\SMA(m,n;s,k)$ whenever $\gcd(s,k)\geq 2$, or $s\equiv 0 \pmod 4$,  or $k\equiv 0 \pmod 4$. 
Furthermore, there exists a diagonal $\SMA(n,n;k,k)$ for any $n\geq k \geq 3$.
\end{thm}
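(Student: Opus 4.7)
The plan is to prove the two claims in reverse order of logical dependency: first establish the existence of a diagonal $\SMA(n,n;k,k)$ for every $n\geq k\geq 3$, since this is the engine driving the first assertion. Once such a diagonal array is available, an application of Theorem \ref{sq->rt} with $c=1$ to a diagonal $\SMA(nk/d,nk/d;d,d)$ yields an $\SMA(m,n;s,k)$ whenever $d=\gcd(s,k)\geq 2$. By replacing $\SMA(m,n;s,k)$ with its transpose $\SMA(n,m;k,s)$, the remaining case reduces to $\gcd(s,k)=1$ with $4\mid s$ and $k$ odd.

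For the diagonal $\SMA(n,n;k,k)$ itself, I would fill the $k$ consecutive diagonals $D_{-\lfloor k/2\rfloor},\ldots,D_{\lceil k/2\rceil-1}$, splitting on the parity of $k$. When $k$ is even, the diagonals pair up as $(D_\ell,D_{-\ell})$ for $\ell=1,\ldots,k/2$, and on each pair I would place entries $x$ and $-x$ in corresponding positions (same row and same column) so that every pair contributes $0$ to each row and column sum; a cyclic assignment of the values $1,\ldots,nk/2$ to the positive diagonals then exhausts $\{\pm 1,\ldots,\pm nk/2\}$. When $k$ is odd, the central diagonal $D_0$ must be self-balancing: if $nk$ is odd, one places $0$ together with symmetric pairs $\pm x$ on $D_0$, while if $nk$ is even (forcing $n$ even and $k$ odd), $D_0$ is populated with $\pm x$ pairs in diametrically opposite positions along the diagonal.

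For the remaining case, $ms=nk$ together with $\gcd(s,k)=1$ forces $m=kr$ and $n=sr$ for some $r\geq 1$. I would then assemble an $\SMA(kr,sr;s,k)$ from $r$ copies of a tight $\SMA(k,s;s,k)$ (whose existence for $k,s\geq 3$ is part of the recalled hypothesis on tight signed magic arrays), arranging them in a block-diagonal pattern within the $kr\times sr$ grid and shifting the $i$-th copy into a fresh subset of $\{\pm 1,\ldots,\pm nk/2\}$ of size $ks$, so that the $r$ copies jointly exhaust this value set. The subtle point is that $k$ is odd, so the standard shiftable refinement (equal numbers of positive and negative entries in every column) is unavailable; this forces an explicit construction of the base $\SMA(k,s;s,k)$ with a carefully prescribed sign pattern, exploiting the divisibility $4\mid s$ to balance signs along the rows and then handling columns by a direct combinatorial argument.

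I expect the main obstacle to be the diagonal construction for $\SMA(n,n;k,k)$ in the odd-$k$ regime, where the central diagonal must absorb the residual imbalance left by the off-diagonal pairs. The small case $k=3$ (with $n$ of either parity) is typically the most delicate, since there is essentially no slack in the value assignment; resolving it would require separating into a few residue classes of $n$ modulo small integers and giving explicit fillings in each. Once these base cases are settled, a uniform diagonal pattern extends to all larger parameters and feeds the rest of the argument through Theorem \ref{sq->rt}.
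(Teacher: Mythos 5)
First, note that the paper does not actually prove this statement: Theorem \ref{SMA} is imported from \cite{MP3} and used as a black box, so your proposal is an attempt to reconstruct an external result rather than an alternative to an internal proof. Unfortunately the reconstruction breaks at its two load-bearing points. The diagonal $\SMA(n,n;k,k)$ construction you sketch cannot work as described: two cells on distinct diagonals can never share both a row and a column, so ``placing $x$ and $-x$ in corresponding positions'' on a pair $(D_\ell,D_{-\ell})$ can only mean that the entries of $D_{-\ell}$ are, row by row, the negatives of those of $D_\ell$; but then the contribution of the pair to column $j$ is $v_{j-\ell}-v_{j+\ell}$ (where $v_i$ denotes the entry of $D_\ell$ in row $i$), and forcing this to vanish for every $j$ makes $v$ invariant under the shift $i\mapsto i+2\ell \pmod n$, contradicting distinctness of the entries. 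The same telescoping argument applied to any two diagonals shows that a diagonal $\SMA(N;2)$ does not exist at all; consequently your reduction of the case $\gcd(s,k)=2$ (e.g.\ $s=6$, $k=10$, where neither clause $s\equiv 0$ nor $k\equiv 0 \pmod 4$ applies) through Theorem \ref{sq->rt} has no diagonal square array to start from, and that subcase of the first assertion is simply not reached by your plan.

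The third gap is the case $\gcd(s,k)=1$ with $4\mid s$ and $k$ odd. In your block-diagonal assembly each of the $r$ copies occupies its own rows and columns, so each copy individually must have all row and column sums zero while using a translated value set $\{\pm(M+1),\ldots,\pm(M+ks/2)\}$. The only general mechanism for producing such translates is the shiftable property (equally many positive and negative entries in every line), and that is exactly what is unavailable in the columns when $k$ is odd: adding $c$ to the positive entries and $-c$ to the negative ones moves each column sum by $(p_j-q_j)c\neq 0$. You acknowledge this and defer to ``a direct combinatorial argument,'' but that argument is the entire content of the case and is not supplied. For comparison, the machinery the present paper uses for divisibility by $4$ is quite different (see Proposition \ref{d0}, where four diagonals at a time are filled with $3\times 2$ blocks whose row-sum defects cancel in pairs), and \cite{MP3} likewise builds the $s\equiv 0$ or $k\equiv 0 \pmod 4$ cases from explicit block constructions rather than from translated tight arrays. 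As it stands, only the reduction for $\gcd(s,k)=d\geq 3$ via Theorem \ref{sq->rt} is sound, and even that is conditional on the diagonal construction you have not actually given.
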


Replacing the entry $x\in \Z$ of an $\SMA(m,n;s,k)$, with the element $[x]_{nk+1}$ of $\Z_{nk+1}$,
we get the following.

\begin{lem}\label{SMA->MPF}
Suppose there exists an $\SMA(m,n;s,k)$, where $nk$ is even.
Then there exists a $\Mo_{\Z_{nk+1}^\ast}(m,n;s,k)$.
\end{lem}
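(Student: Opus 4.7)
The plan is to apply the canonical projection $\pi : \Z \to \Z_{nk+1}$ entrywise to a given $\SMA(m,n;s,k)$, call it $A$, producing a partially filled array $B$ of the same shape; the goal then reduces to checking that $B$ satisfies the three defining properties of a $\Mo_{\Z_{nk+1}^\ast}(m,n;s,k)$. The shape and the counts of filled cells (namely $s$ per row and $k$ per column) are inherited directly from $A$, so I only need to verify the entry set and the row/column sums.

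First I would check that $\pi$ restricted to $\E(A) = \{\pm 1, \pm 2, \ldots, \pm nk/2\}$ is a bijection onto $\Z_{nk+1}^\ast$. Since $nk$ is even, the modulus $nk+1$ is odd, and every element $x\in \E(A)$ satisfies $1\leq |x|\leq nk/2 < nk+1$, so $\pi(x)\neq [0]_{nk+1}$. Moreover, for two distinct $x,y\in \E(A)$ one has $|x-y|\leq nk<nk+1$, so $\pi(x)\neq \pi(y)$. Thus $\pi(\E(A))$ consists of $nk$ pairwise distinct nonzero elements of $\Z_{nk+1}$, and since $|\Z_{nk+1}^\ast|=nk$, it equals $\Z_{nk+1}^\ast$.

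Second, because $\pi$ is a group homomorphism and every row sum and every column sum of $A$ equals $0\in\Z$, the corresponding row and column sums of $B$ equal $[0]_{nk+1}$. Together with the first step this shows that $B$ is a $\Mo_{\Z_{nk+1}^\ast}(m,n;s,k)$, completing the proof.

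There is essentially no obstacle here beyond bookkeeping: the only point that requires the hypothesis that $nk$ is even is the interval $\{\pm 1,\ldots,\pm nk/2\}$ being a well-defined symmetric set of size $nk$, which in turn is what makes the reduction modulo the odd number $nk+1$ injective with image exactly $\Z_{nk+1}^\ast$.
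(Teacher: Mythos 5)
Your proposal is correct and is exactly the paper's argument: the paper states this lemma immediately after the remark ``Replacing the entry $x\in \Z$ of an $\SMA(m,n;s,k)$ with the element $[x]_{nk+1}$ of $\Z_{nk+1}$, we get the following,'' i.e.\ entrywise reduction modulo $nk+1$. Your write-up simply spells out the verification (injectivity onto $\Z_{nk+1}^\ast$ and preservation of the zero row/column sums) that the paper leaves implicit.
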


From Theorem \ref{SMA} and Lemma \ref{SMA->MPF} we obtain the following result.

\begin{cor}\label{corSMA->MPF}
Let  $m,n,s,k$ be four integers such that $2\leq s\leq n$,  $2\leq k \leq m$   and  $ms=nk$.
If $nk$ is even, then there exists a $\Mo_{\Z_{nk+1}^\ast}(m,n;s,k)$ in each of the following cases:
\begin{itemize}
 \item[(1)] $s=n\geq 3$ and $k=m\geq 3$, or $m=n$ and $s=k\geq 3$;
 \item[(2)] $k=m=2$ and $s=n \equiv 0,3 \pmod 4$, or $s=n=2$ and $k=m \equiv 0,3 \pmod 4$;
 \item[(3)] $s\equiv 0\pmod 4$ or $k \equiv 0 \pmod 4$;
 \item[(4)] $\gcd(s,k)\geq 2$.
\end{itemize}
\end{cor}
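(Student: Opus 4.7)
The plan is to reduce the whole statement to the existence of suitable signed magic arrays and then invoke Lemma~\ref{SMA->MPF}. Since $nk$ is assumed to be even, Lemma~\ref{SMA->MPF} converts any $\SMA(m,n;s,k)$ into a $\Mo_{\Z_{nk+1}^{\ast}}(m,n;s,k)$, so it suffices to exhibit such a signed magic array in each of the four cases. All of the needed SMA existence statements have been collected in the paper already: the two facts recalled immediately before Theorem~\ref{SMA} together with Theorem~\ref{SMA} itself.

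Concretely, I would dispatch the four cases as follows. Case~(1) splits in two subcases: when $s=n\geq 3$ and $k=m\geq 3$, the tight signed magic array $\SMA(m,n;n,m)$ is provided by the classical existence result for all $m,n>2$ recalled just before Theorem~\ref{SMA}; when $m=n$ and $s=k\geq 3$, Theorem~\ref{SMA} itself yields a diagonal $\SMA(n,n;k,k)$, which is in particular an $\SMA(n,n;k,k)$. Case~(2) also splits: when $k=m=2$ and $s=n\equiv 0,3\pmod 4$, the existence of $\SMA(2,n;n,2)$ is exactly the characterization recalled before Theorem~\ref{SMA}; the symmetric subcase $s=n=2$, $k=m\equiv 0,3\pmod 4$ follows by transposing an $\SMA(2,m;m,2)$, since the transpose of a signed magic array is again a signed magic array with the parameters $s$ and $k$ interchanged. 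Finally, cases~(3) and~(4) are immediate applications of the first assertion of Theorem~\ref{SMA}, whose hypotheses $\gcd(s,k)\geq 2$, $s\equiv 0\pmod 4$ and $k\equiv 0\pmod 4$ match ours (and with $s,k\geq 3$ automatic in the situations of interest).

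There is no genuine obstacle: the proof is a bookkeeping argument that routes each hypothesis to the appropriate SMA existence result and then applies Lemma~\ref{SMA->MPF}. The only point that deserves explicit mention is the transposition used in case~(2), which is entirely formal because the row and column roles are symmetric in the definition of a signed magic array.
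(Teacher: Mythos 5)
Your proposal matches the paper's own argument, which simply derives the corollary from Theorem \ref{SMA} (together with the SMA existence facts recalled just before it) combined with Lemma \ref{SMA->MPF}; your case-by-case routing, including the transposition remark for case (2), is exactly the intended bookkeeping. No meaningful difference in approach.
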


\begin{prop}\label{ciclico}
A tight $\MoS_{\Z_{2nc+1}^*}(2,n; c)$ exists if and only if $n\geq 3$.
\end{prop}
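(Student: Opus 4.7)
The statement is an iff, so I handle both directions separately.

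\emph{Necessity ($n\geq 3$).} If $n=2$, take any one of the $c$ arrays: it is a tight $2\times 2$ array whose row sums and column sums all vanish. Writing the entries as $\alpha,\beta,\gamma,\delta$ in reading order, the system $\alpha+\beta=\gamma+\delta=\alpha+\gamma=\beta+\delta=0$ forces $\beta=-\alpha$, $\gamma=-\alpha$, and $\delta=\alpha$, so the array contains at most two distinct entries. This contradicts condition (a) of Definition \ref{magic_pf_set}, which demands four distinct entries in each $2\times 2$ member of a $\MoS_{\Z_{4c+1}^\ast}(2,2;c)$. Hence $n\geq 3$.

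\emph{Sufficiency ($n\geq 3$).} Fix $n\geq 3$ and $c\geq 1$. Because each column of a tight $2\times n$ array with zero column sum has the form $(x,-x)$, constructing the required set is equivalent to partitioning the $nc$ inverse pairs of $\Z_{2nc+1}^\ast$ into $c$ classes $P_1,\ldots,P_c$ of size $n$ and choosing, inside each class, a representative per pair so that the $n$ chosen representatives sum to $0\bmod(2nc+1)$. The $i$th array is then built by placing the chosen representatives in its top row and their negatives in its bottom row.

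If $n$ is even (so $n\geq 4$), both $s=n$ and $k=2$ are even with $(s,k)\neq(2,2)$, and Lemma \ref{rect->MPFS} immediately yields a $\MoS_{\Z_{2nc+1}^\ast}(2,n;n,2;c)$.

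If $n$ is odd, I would give an explicit construction. The case $c=1$ is immediate: for $n\geq 3$ the signed sums $\bigl\{\sum_{j=1}^n \varepsilon_j j:\varepsilon_j\in\{\pm 1\}\bigr\}$ realise every integer of the correct parity in $[-n(n+1)/2,n(n+1)/2]$, and a routine parity check shows this range contains a multiple of $2n+1$ of the right parity (namely $0$ for $n\equiv 0,3\pmod 4$ and $\pm(2n+1)$ for $n\equiv 1\pmod 4$). For $c\geq 2$, my plan is an inductive construction: at each step peel off two carefully chosen classes $P,P'$ of $n$ inverse pairs whose representatives admit zero-sum transversals modulo $2nc+1$, thereby reducing to a residual instance of the same problem with two fewer classes. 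The main obstacle is coordinating the partition with the zero-sum condition, since the naive partition into consecutive blocks $\{(i-1)n+1,\ldots,in\}$ fails to admit zero-sum transversals for all $i$ simultaneously when $c$ is large; overcoming this requires the two carved-off classes to be complementary in a precise arithmetic sense, and verifying this balance is the technical heart of the proof.
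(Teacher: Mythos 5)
Your reformulation of the problem is accurate: since every column of a tight zero-sum $2\times n$ array is a pair $(x,-x)$, the task is exactly to partition the $nc$ inverse pairs of $\Z_{2nc+1}^\ast$ into $c$ classes of size $n$, each admitting a zero-sum transversal. Your necessity argument for $n=2$ is correct, the even case via Lemma \ref{rect->MPFS} matches the paper, and your $c=1$ argument (signed sums $\sum\varepsilon_j j$ realise all integers of the parity of $n(n+1)/2$ in $[-n(n+1)/2,n(n+1)/2]$, which contains $0$ or $\pm(2n+1)$ as appropriate) is a legitimate and arguably cleaner alternative to the paper's explicit $\Lambda_+,\Lambda_-$ construction for $n\equiv 1\pmod 4$.

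However, for odd $n$ and $c\geq 2$ you have not given a proof: you state a plan (peel off two ``complementary'' classes at a time) and explicitly defer the coordination of the partition with the zero-sum condition, which you yourself identify as the technical heart. That heart is missing, so the argument does not close. The paper's key idea for $c\geq 3$, which you do not have, is to import a tight Heffter array $\H(c,n;n,c)$ (existence from \cite{ABD} for all $c,n\geq 3$): its rows already give a partition of a half-set $\Omega$, with $\Omega\cup-\Omega=\Z_{2nc+1}^\ast$, into $c$ zero-sum classes of size $n$ --- precisely the combinatorial object your sketch is trying to build from scratch. Stacking each row over its negation then yields the $c$ arrays. The case $c=2$ falls outside that existence result and requires a separate explicit construction (the paper builds the two arrays from small blocks $U_3,V_3,U_9,V_9,W_x$ according to $n\bmod 4$). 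To repair your proof you would either need to carry out your inductive pairing argument in full detail, or invoke the Heffter array machinery for $c\geq 3$ and still supply an explicit construction for $c=2$.
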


\begin{proof}
As already observed, there is no $\MoS_{\Z_{2nc+1}^*}(2,n;c)$ when $n=1,2$. 
If $n\geq 4$ is even, we apply Lemma \ref{rect->MPFS} taking $m=k=2$ and $n=s$. So, we may assume that $n\geq 3$ is odd.

We first consider the case $c\geq 3$. Let $H$ be a Heffter array $\H(c,n;n,c)$ (see Definition \ref{def:H}), whose existence was proved in \cite{ABD}.
So, there exists a subset $\Omega$ of $\Z_{2nc+1}$ such that $\Omega\cup -\Omega$ is a partition of $\Z_{2nc+1}^*$.
Let $R_1,R_2,\ldots,R_c$ be the rows of $H$: by definition, the elements of each $R_i$ sum to $[0]_{2nc+1}$.
For every $i\in [1,c]$ we construct the $2\times n$ array $A_i$, by taking
$$A_i=\begin{array}{|c|}\hline R_i \\ \hline -R_i \\\hline\end{array}.$$
By construction, the elements of each column of $A_i$ sum to $[0]_{2nc+1}$. Also, taking $\B=\{A_1,A_2,\ldots,A_c\}$, we have
$\E(\B)=\Omega \cup -\Omega = \Z_{2nc+1}^*$, showing that $\B$ is a $\MoS_{\Z_{2nc+1}^*}(2,n;c)$.

Now, we deal with the case $c=1$. If $n\equiv 3\pmod{4}$, then the result follows from Corollary \ref{corSMA->MPF}.
So, suppose $n\equiv 1 \pmod 4$, and write $n=4\ell+5$ with  $\ell \geq 0$.
Take the following subsets of $[1,n]$:
$$\Lambda_+ = \{\ell+1\} \cup [\ell+3, 3\ell+5] \equad 
\Lambda_-  = [1,\ell] \cup \{\ell+2\} \cup [ 3\ell+6,  4\ell+5].$$
Then $\Lambda_+\cup \Lambda_- = [1, n]$ and
$$\begin{array}{rcl}
\sum\limits_{\lambda \in \Lambda_+} \lambda & = & (\ell+1) +\binom{3\ell+6}{2}-\binom{\ell+3}{2}=4\ell^2+ 15\ell+13,\\
\sum\limits_{\lambda \in \Lambda_-} \lambda & =& \binom{\ell+1}{2} + (\ell+2) +\binom{4\ell+6}{2}-\binom{3\ell+6}{2} = 
4 \ell^2+7  \ell  +2.
\end{array}$$
Now, let $(i_1,\ldots,i_{2\ell+4})$ be any ordering of $\Lambda_+$ and $(j_1,\ldots,j_{2\ell+1})$ be any ordering of $\Lambda_-$.
Take the following $2\times n$ array with elements in $\Z_{2n+1}^\ast$:
$$A=\begin{array}{|c|c|c|c|c|c|}\hline
[+i_1]_{2n+1} & \cdots & [+i_{2\ell+4}]_{2n+1} & [-j_1]_{2n+1} & \cdots & [-j_{2\ell+1}]_{2n+1} \\\hline
[-i_1]_{2n+1} & \cdots & [-i_{2\ell+4}]_{2n+1} & [+j_1]_{2n+1} & \cdots & [+j_{2\ell+1}]_{2n+1} \\\hline
\end{array}.$$
Clearly, each column sums to $[0]_{2n+1}$. Furthermore, the elements of the first row of $A$ sum to 
$[4\ell^2+15\ell+13]_{2n+1}- [4\ell^2+7\ell+2]_{2n+1}=[8\ell+11]_{2n+1}=[0]_{2n+1}$.
This shows that $A$ is a $\Mo_{\Z_{2n+1}^\ast}(2,n)$, i.e., a $\MoS_{\Z_{2n+1}^\ast}(2,n;1)$.

Finally, we consider the case $c=2$. Set $N=4n+1$ and define the following blocks with elements in $\Z_N$,
where $x$ is a positive integer:
$$\begin{array}{rcl}
U_3 & =& \begin{array}{|c|c|c|}\hline
[1]_N & [3]_N & -[4]_N \\\hline
-[1]_N & -[3]_N & [4]_N \\\hline
     \end{array},\\ \\[-8pt]
V_3 & =& \begin{array}{|c|c|c|}\hline
[2]_N & [5]_N & -[7]_N \\\hline
-[2]_N & -[5]_N & [7]_N \\\hline
     \end{array}, 
               \end{array}$$
$$\begin{array}{rcl}
U_9 & =& \begin{array}{|c|c|c|c|c|c|c|c|c|}\hline
[1]_N & [2]_N & [3]_N & [4]_N & -[5]_N  & [12]_N & [13]_N & -[14]_N   & -[16]_N\\\hline
-[1]_N & -[2]_N & -[3]_N & -[4]_N & [5]_N  & -[12]_N & -[13]_N & [14]_N   & [16]_N\\\hline
     \end{array},\\ \\[-8pt]
V_9 & =& \begin{array}{|c|c|c|c|c|c|c|c|c|}\hline
[6]_N & [7]_N & [8]_N & [9]_N & [10]_N & [11]_N & -[15]_N & -[17]_N  & -[19]_N \\\hline
-[6]_N & -[7]_N & -[8]_N & -[9]_N & -[10]_N & -[11]_N & [15]_N & [17]_N  & [19]_N \\\hline
     \end{array},\\ \\[-8pt]
W_x & =& \begin{array}{|c|c|c|c|}\hline
 [x]_{N} & -[x+2]_{N} & - [x+3]_{N} & [x+5]_{N}\\\hline       
 -[x]_{N} & [x+2]_{N} &   [x+3]_{N} & -[x+5]_{N}\\\hline 
     \end{array}.
     \end{array}$$
Note that all these blocks have rows and columns that sum to $[0]_N$.

If $n=5$, we take 
$$\begin{array}{rcl}
 A_1 & =& \begin{array}{|c|c|c|c|c|}\hline
[1]_{21} & [2]_{21} & [13]_{21} & [16]_{21} & [10]_{21} \\ \hline
[20]_{21} & [19]_{21} & [8]_{21} & [5]_{21} & [11]_{21}\\ \hline
\end{array},\\ \\[-8pt]
 A_2 & =& \begin{array}{|c|c|c|c|c|}\hline
[3]_{21}  & [17]_{21}  & [6]_{21}  & [7 ]_{21}  & [9]_{21}  \\ \hline
[18]_{21}  & [4]_{21}  & [15]_{21}  & [14 ]_{21}  & [12]_{21}  \\ \hline
\end{array}.
\end{array}$$

If $n\equiv 3 \pmod 4$, write $n=4\ell+3$ and define $A_1$ as the $2\times n$ array obtained by the juxtaposition of $U_3$
and the blocks $W_{6+4j}$ with $j \in [0,\ell-1]$  (so, $A_1$ coincides with $U_3$ when $n=3$). Also, 
let $A_2$ be the $2\times n$ array defined by the juxtaposition of $V_3$ and the blocks $W_{n+3+4j}$ with $j \in [0,\ell-1]$. We have 
$\E(A_1)=\{\pm [z]_N \mid  z \in \Psi_1\}$ and $\E(A_2)=\{\pm [z]_N \mid  z \in \Psi_2\}$, where 
$$\begin{array}{rcl}
\Psi_1 &  =& \{1,3,4\}\cup \{6\} \cup  [8, n+2] \cup \{n+4\},\\
\Psi_2 & =& \{2, 5,7 \}\cup \{n+3\}\cup [n+5,2n-1] \cup \{2n+1\}.
\end{array}$$

If $n\equiv 1 \pmod 4$, with $n\geq 9$, write $n=4\ell+9$ and define $A_1$ as the $2\times n$ array obtained by the juxtaposition 
of $U_9$ and the blocks 
$W_{18+4j }$ with $j \in [0,\ell-1]$. Also, let $A_2$ be the $2\times n$ array defined by the juxtaposition of $V_9$ and 
the blocks $W_{n+9+4j}$ with $j \in [0,\ell-1]$. We have 
$\E(A_1)=\{\pm [z]_N \mid  z \in \Psi_1\}$ and $\E(A_2)=\{\pm [z]_N \mid  z \in \Psi_2\}$, where 
$$\begin{array}{rcl}
\Psi_1 &= & \{1, 2, 3, 4, 5, 12, 13, 14,16 \}\cup \{ 18\} \cup [20, n+8 ] \cup \{n+10\}, \\
\Psi_2 &= & \{6, 7, 8, 9, 10,  11, 15, 17, 19\}\cup \{n+9\}\cup [n+11,2n-1]\cup \{2n+1\}.
\end{array}$$

In all the previous cases, we get $\E(A_1)\cup \E(A_2)=\{\pm [z]_{4n+1}\mid z\in [1,2n]\}=\Z_{4n+1}^*$, and hence the set $\{A_1,A_2\}$ is a $\MoS_{\Z_{4n+1}^*}(2,n;2)$.
\end{proof}

For instance, we can construct a $\Mo_{\Z_{19}^*}(2,9)$ following the proof of the previous proposition.
In this case,  $\Lambda_+=\{2, 4,5,6,7,8 \}$ and $\Lambda_-=\{1,3,9\}$:
$$\begin{array}{|c|c|c|c|c|c|c|c|c|}\hline
[2]_{19} &  [4]_{19} & [5]_{19} & [6]_{19} & [7]_{19} & [8]_{19} & [18]_{19}  & [16]_{19} & [10]_{19} \\\hline
[17]_{19} & [15]_{19} & [14]_{19} & [13]_{19} & [12]_{19} & [11]_{19} &  [1]_{19} & [3]_{19} & [9]_{19} \\\hline
\end{array}.$$

\begin{cor}\label{cor pieno}
Let  $m,n\geq 2$. There exists a $\Mo_{\Z_{mn+1}^\ast}(m,n)$ if and only if $mn+1$ is odd and greater than $5$.
\end{cor}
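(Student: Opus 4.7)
First I would observe that the stated hypotheses are equivalent to: $mn$ is even and $mn\geq 6$ (since $m,n\geq 2$ forces $mn\geq 4$, and $mn+1>5$ excludes $mn=4$). The proof then splits naturally into a necessity argument and a case-by-case construction.

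For necessity, I would extract two obstructions. The first is parity: if a $\Mo_{\Z_{mn+1}^\ast}(m,n)$ exists then the sum of all its entries is $m$ times the common row sum $0$, hence $\sum_{x=1}^{mn}[x]_{mn+1}=\bigl[\tfrac{mn(mn+1)}{2}\bigr]_{mn+1}=[0]_{mn+1}$, which forces $2\mid mn$, so $mn+1$ is odd. The second is the exclusion $mn=4$: here we would be asking for a $\Mo_{\Z_5^\ast}(2,2)$ with all four cells filled. In $\Z_5$ the only pairs in $\{1,2,3,4\}$ summing to $0$ are $\{1,4\}$ and $\{2,3\}$, so the two rows would have to be permutations of these two pairs; a short case analysis (four arrangements) shows that no column pair then sums to $0$. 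This is exactly the $\Gamma=\Z_5$ obstruction already encountered in Theorem \ref{main}(6).

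For sufficiency, assume $mn$ is even and $mn\geq 6$, and split on whether either of $m,n$ equals $2$. If $m=2$, then $n\geq 3$ and Proposition \ref{ciclico} applied with $c=1$ yields a $\Mo_{\Z_{2n+1}^\ast}(2,n)$, which is precisely the object we want. The case $n=2$ reduces to this by transposition, since the transpose of a $\Mo_\Omega(m,n)$ is a $\Mo_\Omega(n,m)$. If instead $m,n\geq 3$, then a tight array has $s=n\geq 3$ and $k=m\geq 3$, and Corollary \ref{corSMA->MPF}(1), which requires only that $nk=mn$ be even, delivers a $\Mo_{\Z_{mn+1}^\ast}(m,n;n,m)$, i.e., exactly a tight $\Mo_{\Z_{mn+1}^\ast}(m,n)$.

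There is no real obstacle here: the entire statement is a packaging of Proposition \ref{ciclico} (which already absorbed the delicate odd-$n$ subcases for two-rowed arrays) together with Corollary \ref{corSMA->MPF}(1) (which imports the known existence of $\SMA(m,n;n,m)$ for $m,n\geq 3$). The only point requiring a genuine verification, rather than a citation, is the $(m,n)=(2,2)$ necessity, and even that reduces to a four-line enumeration in $\Z_5$.
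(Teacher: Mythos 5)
Your proposal is correct and follows essentially the same route the paper intends: necessity comes from the observation that $\Z_{mn+1}$ must lie in $\G$ (forcing $mn+1$ odd) together with the $\Z_5$ exclusion, and sufficiency combines Proposition \ref{ciclico} (for $m=2$ or $n=2$, up to transposition) with Corollary \ref{corSMA->MPF}(1) (for $m,n\geq 3$, via the known existence of $\SMA(m,n;n,m)$). The paper leaves this corollary's proof implicit, and your write-up supplies exactly the missing citations and the short $\Z_5$ enumeration.
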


We now consider the general case of any finite abelian group, not necessarily cyclic.

\begin{lem}\label{noncic}
Let  $\Gamma_1,\Gamma_2$ be two finite abelian groups  with $|\Gamma_1|=2a+1$ and $|\Gamma_2|=2b+1$.
Suppose that $2a+1$ divides $2b+1$ and that there exists a $\Mo_{\Gamma_2^*}(2,b)$.
Then, there exists a  $\Mo_{\Gamma^*}(2,a+b+2ab)$, where $\Gamma=\Gamma_1\oplus \Gamma_2$.
\end{lem}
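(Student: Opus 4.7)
The plan is to produce the desired $2\times n$ array (with $n=a+b+2ab$) by horizontally juxtaposing a lifted copy of the given $\Mo_{\Gamma_2^*}(2,b)$ with one $2\times(2b+1)$ block for each pair $\{g,-g\}$ of opposite elements in $\Gamma_1^*$. Concretely, write $A$ for the assumed $\Mo_{\Gamma_2^*}(2,b)$ and let $A'$ be the $2\times b$ array obtained from $A$ by replacing every entry $h$ with $(0_{\Gamma_1},h)\in\Gamma$; the row and column sums of $A'$ are then $0_\Gamma$ and $\E(A')=\{0_{\Gamma_1}\}\times\Gamma_2^*$. Since $|\Gamma_1|=2a+1$ is odd, $\Gamma_1^*$ decomposes into $a$ pairs $\{g,-g\}$, so I would choose representatives $g_1,\ldots,g_a$ and an enumeration $\Gamma_2=\{0,h_1,\ldots,h_{2b}\}$.

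For each $i\in[1,a]$, I would define the $2\times(2b+1)$ block $B_i$ whose top row is $(g_i,0),(g_i,h_1),\ldots,(g_i,h_{2b})$ and whose bottom row is $(-g_i,0),(-g_i,-h_1),\ldots,(-g_i,-h_{2b})$. By construction every column of $B_i$ has the shape $(x,y)/(-x,-y)$ and hence sums to $0_\Gamma$, while the entries of $B_i$ exhaust $\{g_i,-g_i\}\times\Gamma_2$. The top-row sum equals $\bigl((2b+1)g_i,\sum_{h\in\Gamma_2}h\bigr)$: the second coordinate vanishes because $\Gamma_2$ has odd order and thus no involutions, so $\sum_{h\in\Gamma_2}h=0_{\Gamma_2}$; the first coordinate vanishes because the order of $g_i$ divides $|\Gamma_1|=2a+1$, which by hypothesis divides $2b+1$, so $(2b+1)g_i=0_{\Gamma_1}$. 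The bottom-row sum is then also $0_\Gamma$ since the bottom row is, column by column, the negative of the top.

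Finally, juxtaposing, $M=A'\mid B_1\mid\cdots\mid B_a$ has $b+a(2b+1)=a+b+2ab=n$ columns, all of whose row and column sums are $0_\Gamma$, and
$$\E(M)=\bigl(\{0_{\Gamma_1}\}\times\Gamma_2^*\bigr)\cup\bigcup_{i=1}^{a}\bigl(\{g_i,-g_i\}\times\Gamma_2\bigr)=\Gamma^*,$$
each element appearing exactly once; so $M$ is the required $\Mo_{\Gamma^*}(2,a+b+2ab)$. The only genuine step is the vanishing of $(2b+1)g_i$, which is precisely where the divisibility hypothesis $(2a+1)\mid(2b+1)$ is used; the rest is bookkeeping, so I would not expect any real obstacle beyond this single observation.
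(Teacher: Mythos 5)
Your proposal is correct and follows essentially the same construction as the paper: a lifted copy $\{0_{\Gamma_1}\}\times\Gamma_2^*$ of the given array juxtaposed with $a$ blocks of the form $(g_i,h)/(-g_i,-h)$ over all $h\in\Gamma_2$, with the row sums vanishing by $(2a+1)\mid(2b+1)$ (Lagrange) and by $\sum_{h\in\Gamma_2}h=0_{\Gamma_2}$ for a group of odd order. No issues.
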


\begin{proof}
Note that $\Gamma_1,\Gamma_2$ belong to $\G$, since they both have odd order.
Let $(g_1,\ldots,g_{2a})$ be any ordering of the elements of $\Gamma_1^\ast$ such that $g_{a+\ell}=-g_{\ell}$ 
for all $\ell \in [1,a]$, and
let $(h_1,\ldots,h_{2b+1})$ be any ordering of the elements of $\Gamma_2$.
Let $V=(v_{i,j})$  be a $\Mo_{\Gamma_2^*}(2,b)$. We construct a $2\times (a+b+2ab)$ array $C$ as follows. 

First, take the $2\times b$ array
$T= \left( t_{i,j}\right)$, where $t_{i,j}=(0_{\Gamma_1}, v_{i,j})$ for any
$i=1,2$ and any $j\in [1,b]$.
Since the elements of each row and each column of $V$ sum to $0_{\Gamma_2}$, 
the elements of each row and each column of $T$ sum to $(0_{\Gamma_1}, 0_{\Gamma_2})$. 
Now, for every  $\ell\in [1,a]$, we construct a $2\times (2b+1)$ array $U_\ell = \left(u^{(\ell)}_{i,r} \right)$  setting
$u^{(\ell)}_{1,r}=(g_\ell, h_r)$ and $u^{(\ell)}_{2,r}=(- g_\ell, - h_r )$ for every $r\in [1,2b+1]$.
Note that the elements of each column of $U_\ell$ sum to $(0_{\Gamma_1}, 0_{\Gamma_2})$, while
the elements of each row sum to $\pm \left(|\Gamma_2| g_\ell, \sum\limits_{r=1}^{2b+1} h_r \right)
=(\pm|\Gamma_2| g_\ell, 0_{\Gamma_2} )$.
Since the order of $\Gamma_1$ divides the order of $\Gamma_2$, applying Lagrange's theorem, we obtain $|\Gamma_2| g_\ell=0_{\Gamma_1}$, 
whence $(\pm|\Gamma_2| g_\ell, 0_{\Gamma_2} )= (0_{\Gamma_1}, 0_{\Gamma_2})$.
Finally, we construct the array $C$:
$$C=\;\begin{array}{|c|c|c|c|}\hline
T & U_1 & \cdots & U_a \\\hline  
  \end{array}.$$
By the previous observations, the elements of each row and each column of $C$ sum to $0_{\Gamma}$.
Furthermore, $\E(T)=\{(0,h) \mid h \in \Gamma_2^\ast\}$ and $\E(U_\ell)=\{\pm (g_\ell, h) \mid h \in \Gamma_2\}$.
Hence, $\E(C)=\E(T)\cup \left(\bigcup\limits_{\ell=1}^{a}\E(U_\ell)\right)=\Gamma^\ast$.
This proves that $C$ is a  $\Mo_{\Gamma^*}(2,a+b+2ab)$.
\end{proof}

\begin{cor}\label{corG}
Let $\Gamma$ be an abelian group of order $2n+1\geq 3$. There exists a $\Mo_{\Gamma^*}(2,n)$ if and only if 
$\Gamma \not\in \{\Z_3,\Z_5,\Z_3\oplus \Z_3\}$.
\end{cor}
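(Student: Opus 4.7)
My plan is to proceed by strong induction on $|\Gamma|$, organized by the structure theorem for finite abelian groups. For the necessity direction, I would observe first that for $\Gamma = \Z_3$ we have $n = 1$, which violates the condition $s \geq 2$ in Definition \ref{magic_pf}, so no such array exists. For $\Gamma \in \{\Z_5, \Z_3 \oplus \Z_3\}$, since $|\Gamma|$ is odd every nonzero element $g$ satisfies $g \neq -g$, so each column must realize a pair $\{g,-g\}$. Finding a $\Mo_{\Gamma^*}(2,n)$ therefore reduces to selecting a representative $a_i$ from each of the $n$ pairs so that $\sum_i a_i = 0_\Gamma$. A finite case check rules out both groups: for $\Z_5$ one needs $\pm 1 \pm 2 \equiv 0 \pmod 5$, which fails; for $\Z_3 \oplus \Z_3$ a short enumeration over sign choices for the four canonical representatives shows that the two coordinate equations in $\Z_3$ are simultaneously unsolvable.

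For sufficiency, I would write $\Gamma \cong \Z_{n_1} \oplus \Z_{n_2} \oplus \cdots \oplus \Z_{n_\ell}$ with $n_1 \mid n_2 \mid \cdots \mid n_\ell$ and all $n_i$ odd. If $\ell = 1$, the group is cyclic and Proposition \ref{ciclico} with $c=1$ delivers the required $\Mo_{\Z_{2n+1}^*}(2,n)$ for every $n \geq 3$, that is, whenever $|\Gamma| \geq 7$. If $\ell \geq 2$, set $\Gamma_1 = \Z_{n_1}$ and $\Gamma_2 = \Z_{n_2} \oplus \cdots \oplus \Z_{n_\ell}$; since $n_1$ divides $n_2$ and hence $|\Gamma_2|$, the divisibility hypothesis of Lemma \ref{noncic} is met. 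Provided a $\Mo_{\Gamma_2^*}(2,b)$ exists, which by the inductive hypothesis is true unless $\Gamma_2 \in \{\Z_3, \Z_5, \Z_3 \oplus \Z_3\}$, Lemma \ref{noncic} produces the array on $\Gamma$.

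It remains to track when this inductive step stalls. A short analysis of the three excluded possibilities for $\Gamma_2$, together with $n_1 \mid n_2$ and $n_1 \geq 3$, forces $\Gamma$ itself into $\{\Z_3 \oplus \Z_3, \Z_5 \oplus \Z_5, \Z_3 \oplus \Z_3 \oplus \Z_3\}$. The first is excluded by the statement; for the other two I would exhibit explicit arrays. As observed in the necessity discussion, constructing such an array is equivalent to choosing a sign for each of the $12$ (respectively $13$) pairs $\{g,-g\}$ in $\Gamma^*$ so that the representatives sum to $0_\Gamma$. Flipping one sign changes the running total by $-2g$, so starting from any canonical list with sum $\sigma$ one seeks a subset $S$ with $\sum_{i \in S} 2g_i = \sigma$; since the canonical representatives run over all lines through the origin they generate $\Gamma$, and such a subset is readily located (and the resulting arrays displayed) by direct search.

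The main obstacle is precisely this last step: the induction via Lemma \ref{noncic} breaks at the two exceptional groups $\Z_5 \oplus \Z_5$ and $\Z_3 \oplus \Z_3 \oplus \Z_3$, because every direct-sum decomposition of them forces the second factor into the excluded set $\{\Z_3, \Z_5, \Z_3 \oplus \Z_3\}$ — $\Z_5 \oplus \Z_5$ admits only $\Z_5 \oplus \Z_5$, and $\Z_3 \oplus \Z_3 \oplus \Z_3$ has exponent $3$ so cannot split off a $\Z_9$ factor. Consequently the argument requires two explicit bespoke constructions, and a careful verification that no other bad cases escape the induction.
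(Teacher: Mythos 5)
Your proposal is correct and follows essentially the same route as the paper: decompose $\Gamma$ into cyclic factors with the divisibility chain, settle the cyclic case via Proposition \ref{ciclico}, attach the remaining factors one at a time with Lemma \ref{noncic}, and observe that the recursion fails exactly for the elementary abelian groups $\Z_3\oplus\Z_3$ (excluded), $\Z_5\oplus\Z_5$ and $\Z_3\oplus\Z_3\oplus\Z_3$, which need ad hoc arrays. The only difference is presentational (induction peeling off the smallest factor versus the paper's iteration building up from the largest) plus the fact that the paper actually displays the two exceptional arrays $\Mo_{(\Z_3\oplus\Z_3\oplus\Z_3)^*}(2,13)$ and $\Mo_{(\Z_5\oplus\Z_5)^*}(2,12)$, whereas you defer them to a direct search — your sign-flipping/subset-sum justification for their existence is sound, but the explicit arrays should be written out to complete the proof.
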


\begin{proof}
First, decompose $\Gamma$ as the direct sum $\Z_{2n_1+1}\oplus \ldots\oplus \Z_{2n_r+1}$ of cyclic groups of order $2n_i+1$ such that
$2n_i+1$ divides $2n_{i+1}+1$ for all $i\in [1,r-1]$.
Suppose  $2n_r+1\geq 7$. In this case, there exists a $\Mo_{\Z_{2n_r+1}^*}(2,n_r)$ by Proposition \ref{ciclico}.
If $r=1$, our proof is complete. If $r\geq 2$, applying Lemma  \ref{noncic}, there also exists a 
$\Mo_{\Gamma_2^*}\left(2, \frac{(2n_{r}+1)(2n_{r-1}+1)-1}{2} \right)$, where $\Gamma_2=\Z_{2n_{r-1}+1}\oplus \Z_{2n_r+1}$.
Again, if $r=2$, our proof is complete; otherwise, we apply repeatedly Lemma \ref{noncic}, obtaining at the end 
a $\Mo_{\Gamma^*}(2, n )$.

So, we are left to consider the cases when $\Gamma$ is an elementary abelian group of exponent $2n_r+1\in \{3, 5\}$.
We have already seen that there is no $\Mo_{\Z_3^*}(2,1)$ and no $\Mo_{\Z_5^*}(2,2)$. Also,
it is not difficult to prove that there is no $\Mo_{\Gamma^*}(2,4)$ when $\Gamma=\Z_3\oplus \Z_3$.
Assume $2n_r+1=3$ and $r\geq 3$. We first take the following $\Mo_{(\Z_3\oplus \Z_3\oplus \Z_3)^\ast}(2,13)$, 
where the entry $xyz$ must be read as 
$([x]_3,[y]_3,[z]_3)$:
$$\begin{array}{|c|c|c|c|c|c|c|c|c|c|c|c|c|}\hline
001 & 010 & 011 & 021 & 100 & 101 & 102 & 110 & 111 & 221 & 210 & 212 & 122\\\hline
002 & 020 & 022 & 012 & 200 & 202 & 201 & 220 & 222 & 112 & 120 & 121 & 211\\\hline
\end{array}.$$ 
Then, we proceed as before applying repeatedly Lemma \ref{noncic} until we obtain a  $\Mo_{\Gamma^*}(2, n )$.
Assume $2n_r+1=5$ and $r\geq 2$. In this case, it suffices to apply the previous argument starting with the following
$\Mo_{(\Z_5\oplus \Z_5)^\ast}(2,12)$,
where the entry $xy$ must be read as $([x]_5,[y]_5)$:
$$\begin{array}{|c|c|c|c|c|c|c|c|c|c|c|c|}\hline
01 & 02 & 10 & 11 & 12 & 13 & 41 & 20 & 21 & 33 & 32 & 24\\\hline
04 & 03 & 40 & 44 & 43 & 42 & 14 & 30 & 34 & 22 & 23 & 31\\\hline
\end{array}.$$
\end{proof}

\section{Conclusions}

In this paper we introduced the concepts of magic and zero-sum magic partially filled array whose elements belong to a subset $\Omega$ 
of an abelian group $\Gamma$. We think these arrays are worth to be studied not only because they generalize well known objects such as
magic rectangles, $\Gamma$-magic rectangles, signed magic arrays, integer/non integer/relative Heffter arrays,
but also because of their connection with $\Gamma$-supermagic labelings and with zero-sum $\Gamma$-magic graphs.

One of our main achievements is the complete solution for the existence of magic rectangle sets with empty cells
$\MRS(m,n;s,k;c)$ (Theorem \ref{main2}),  which
extends Froncek's result about tight magic rectangle sets.
We have also investigated two significant cases: when $\Omega$ is the full group $\Gamma$ and when $\Omega$ is the set $\Gamma^*$ of 
nonzero elements of $\Gamma$. The  main results about these two cases are described in Theorem \ref{main}, that we can now prove.

\begin{proof}[Proof of Theorem \ref{main}]
Items (1), (5) and (6) follow from Corollaries \ref{cyc}, \ref{cor pieno} and \ref{corG};
item (4) follows from Proposition \ref{ciclico}.
To prove (2) and (3), set $d=\gcd(s,k)$. 
If $d\equiv 0 \pmod 4$, there exists a diagonal $\MoS_\Gamma\left(\frac{nk}{d}; d; c\right)$ 
for every $\Gamma \in \G$ of order $nkc$, by Proposition \ref{d0}.
If $nk$ is odd and $\gcd( n, d - 1 )=1$, then $d\geq 3$ is odd: so, there exists a diagonal 
$\Mo_{\Z_{d}\oplus \Z_{nk/d}}\left(\frac{nk}{d}; d\right)$ by Proposition \ref{diag_k}.
In both cases, we apply Theorem \ref{sq->rt}.
\end{proof}

\end{document}